\numberwithin{equation}{section}
\newcommand{\bigpare}[1]{\bigl(#1\bigr)}
\newcommand{\biggpare}[1]{\biggl(#1\biggr)}
\newcommand{\bigbrac}[1]{\bigl[#1\bigr]}
\newcommand{\bigset}[2]{\bigl\{#1\bigm|#2\bigr\}}
\newcommand{\norm}[1]{\| #1 \|}
\newcommand{\bignorm}[1]{\bigl\| #1 \bigr\|}
\newcommand{\abs}[1]{| #1 |}
\newcommand{\bigabs}[1]{\bigl| #1 \bigr|}
\newcommand{\biggabs}[1]{\biggl| #1 \biggr|}
\newcommand{\jap}[1]{\langle #1 \rangle}
\def\a{\alpha}
\def\b{\beta}
\def\d{\delta}
\def\e{\varepsilon}
\def\f{\varphi}
\def\g{\psi}
\def\l{\lambda}
\def\m{\mu}
\def\n{\nu}
\def\o{\omega}
\def\s{\sigma}
\def\t{\tau}
\def\x{\xi}
\def\y{\eta}
\def\z{\zeta}
\def\th{\theta}
\newcommand{\C}{\Gamma}
\newcommand{\F}{\Phi}
\newcommand{\G}{\Psi}
\renewcommand{\L}{\Lambda}
\renewcommand{\O}{\Omega}
\def\re{\mathbb{R}}
\def\ze{\mathbb{Z}}
\def\pa{\partial}
\newcommand{\supp}{\text{{\rm supp}\;}}
\newcommand{\trace}{\text{{\rm Tr}} }
\DeclareMathOperator*{\slim}{s-lim}
\newcommand{\Ran}{\text{\rm Ran\;}}
\newcommand{\WF}{\mbox{WF}}
\newcommand{\1}{\mbox{I}}
\newcommand{\2}{\mbox{II}}
\newcommand{\3}{\mbox{III}}
\newtheorem{thm}{Theorem}[section]
\newtheorem{lem}[thm]{Lemma}
\newtheorem{cor}[thm]{Corollary}
\theoremstyle{definition}
\newtheorem{defn}{Definition}[section]
\newtheorem{ass}{Assumption}
\theoremstyle{remark}
\newtheorem{rem}{Remark}[section]
\numberwithin{equation}{section}
\title{Remarks on the Fundamental Solution to Schr\"odinger Equation with 
Variable Coefficients}
\author{
Kenichi I{\sc to}%
\footnote{Graduate School of Pure and Applied Sciences, University of Tsukuba,
1-1-1 Tennodai, Tsukuba Ibaraki, 305-8571 Japan. 
E-mail: \texttt{ito\_ken@math.tsukuba.ac.jp}. }
\ and Shu N{\sc akamura}%
\footnote{Graduate School of Mathematical Sciences, 
University of Tokyo, 3-8-1 Komaba, Meguro Tokyo, 
153-8914 Japan. 
E-mail: {\tt shu@ms.u-tokyo.ac.jp}.  
Partially supported by JSPS Grant Kiban (B) 17340033 (2005--2008); Kiban (A) 
21244008 (2009-2013).} }
\begin{document}
\maketitle

\begin{abstract}
We consider Schr\"odinger operators $H$ on $\re^n$ with variable coefficients. 
Let $H_0=-\frac12\triangle$ be the free Schr\"odinger operator and we suppose 
$H$ is a ``short-range'' perturbation of $H_0$. 
Then, under the nontrapping condition, we show the time evolution operator: 
$e^{-itH}$ can be written as a product of the free evolution operator $e^{-itH_0}$ 
and a Fourier integral operator $W(t)$, which is associated to the canonical relation 
given by the classical mechanical scattering. 
We also prove a similar result for the wave operators. 
These results are analogous to results by Hassell and Wunsch \cite{HW1,HW2}, 
but the assumptions, the proof and the formulation of results are considerably different.
The proof employs an Egorov-type theorem similar to those used in previous works 
by the authors \cite{Na1,Na2,IN1} combined with a Beals-type characterization of 
Fourier integral operators. 
\end{abstract}


\section{Introduction}
We consider Schr\"odinger equations on $\re^n$ with $n\geq 1$ 
of the following form:
\begin{align*}
& i\frac{\pa}{\pa t}\g(t,x) =H \g(t,x), \quad t\in\re,x\in\re^n,\\
&\g(0,x)=\g_0(x)\in L^2(\re^n),\\
&H=-\frac12\sum_{j,k=1}^n \frac{\pa}{\pa x_j}a_{jk}(x)\frac{\pa}{\pa x_k} +V(x), 
\end{align*}
where $a_{jk}(x)$ and $V(x)$ are real-valued $C^\infty$-functions on $\re^n$. 

\begin{ass}\label{ass0aa}
There exists $\m>0$ such that for any $\a\in\ze_+^n$ 
\[
\bigabs{\pa_x^\a\bigpare{a_{jk}(x)-\d_{jk}}}\leq C_\a\jap{x}^{-\m-|\a|}, 
\quad \bigabs{\pa_x^\a V(x)}\leq C_\a\jap{x}^{2-\m-|\a|}
\]
for $x\in\re^n$ with some $C_\a>0$. 
\end{ass}

It is well-known that under our assumptions $H$ is essentially self-adjoint on 
$C_0^\infty(\re^n)$. We denote the unique self-adjoint extension by the same symbol $H$. 
Then the solution to the Schr\"odinger equation is given by $\g(t)=e^{-itH}\g_0\in L^2(\re^n)$ 
by Stone's theorem. 

We are interested in the microlocal structure of the evolution operator $e^{-itH}$. 
If $a_{jk}(x)=\d_{jk}$, i.e., if the metric is flat, then $e^{-itH}$ is represented by an oscillatory 
integral similar to Fourier integral operators (Fujiwara \cite{Fu}), though it is not a Fourier 
integral operator (FIO) in the sense of H\"ormander (\cite{Ho1, Ho2}). 
For general $H$, it is difficult to show similar representations because of the existence of 
the caustics. In this paper, we discuss different representation of the evolution operator, namely, 
we show
\[
e^{-itH}=e^{-itH_0}W(t)
\]
where $H_0=-\frac12\triangle$ is the free Schr\"odinger operator, and $W(t)$ is possibly 
an FIO. In the following, we show $W(t)$ is in fact an FIO under suitable conditions. We note 
\[
W(t)=e^{itH_0}e^{-itH},
\]
and we study the microlocal structure of $W(t)$ defined as above. 

In order to state the condition, we consider the classical mechanics associated to our 
Hamiltonian. We set
\[
p(x,\x)=k(x,\x)+V(x), \quad k(x,\x)=\frac12 \sum_{j,k=1}^n a_{jk}(x)\x_j\x_k
\]
on $T^*\re^n\cong\re^n\times \re^n$. $k(x,\x)$ is the kinetic energy, and 
$p(x,\x)$ is the classical Hamilton function. We denote the corresponding Hamilton 
vector fields by $H_p$ and $H_k$, respectively, i.e., 
\[
H_p=\sum_{j=1}^n \frac{\pa p}{\pa\x_j}\frac{\pa}{\pa x_j} 
-\sum_{j=1}^n \frac{\pa p}{\pa x_j}\frac{\pa}{\pa \x_j},
\quad 
H_k=\sum_{j=1}^n \frac{\pa k}{\pa\x_j}\frac{\pa}{\pa x_j} 
-\sum_{j=1}^n \frac{\pa k}{\pa x_j}\frac{\pa}{\pa \x_j},
\]
and we denote the Hamilton flows on $T^*\re^n$ by $\exp(tH_p)$ and $\exp(tH_k)$ 
($t\in\re$), respectively. We write
\[
T^*M\setminus 0 = \bigset{(x,\x)}{(x,\x)\in T^*M, \x\neq 0}.
\]

\begin{defn}\label{defn0101a}
Let $(x_0,\x_0)\in T^*\re^n\setminus 0$, and we denote
\[
(y(t),\y(t))=\exp(tH_k)(x_0,\x_0), \quad \text{for }t\in\re.
\]
$(x_0,\x_0)$ is said to be {\it forward (backward, resp.) nontrapping}\/ if 
\[
|y(t)|\to+\infty\quad \text{as }t\to\pm\infty.
\]
\end{defn}

If $(x_0,\x_0)$ is forward/backward nontrapping, then it is well-known that 
\[
\x_\pm =\lim_{t\to\pm\infty}\y(t)
\]
exist under Assumption~\ref{ass0aa}. Moreover, if $\m>1$, then 
\[
z_\pm =\lim_{t\to\pm\infty} (y(t)-t\y(t))
\]
are also well-known to exist (see, e.g., \cite{Na1}). These imply $y(t)\sim z_\pm +t\x_\pm$ as 
$t\to\pm\infty$. We call $(z_\pm,\x_\pm)$ the scattering data of $(x_0,\x_0)$, 
and we denote
\[
w_\pm(x_0,\x_0) =(z_\pm,\x_\pm)=\lim_{t\to\pm\infty} \exp(-tH_{p_0})
\circ\exp(tH_k)(x_0,\x_0),
\]
where $p_0(\x)=\frac12 |\x|^2$ is the free energy function. 
We note that $z_\pm(x,\x)$ and $\x_\pm(x,\x)$ are homogeneous of order 0 and 1 
with respect to $\x$, respectively, since 
both $k(x,\x)$ and $p_0(\x)$ are homogeneous of order 2 in $\x$. Moreover, 
$w_\pm$ are canonical transform on the domain where $w_\pm$ are defined. 

\begin{thm}\label{thm0101a}
Suppose Assumption~\ref{ass0aa} with $\m=2$, and suppose the global nontrapping condition, 
i.e., every $(x_0,\x_0)\in T^*\re^n\setminus 0$ is nontrapping. Then $W(t)$ is an 
FIO associated to $w_\pm$ for each $t\in\re_\pm$. 
\end{thm}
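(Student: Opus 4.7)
The plan is to combine a Beals-type characterization of Fourier integral operators with an Egorov-type theorem applied to both propagators, following the strategy developed in \cite{Na1,Na2,IN1}. The Beals-type criterion I would invoke says, roughly, that an $L^2$-bounded operator $T$ on $L^2(\re^n)$ is an FIO associated with a canonical transformation $\k$ provided that iterated commutators $\text{ad}_{\ell_1^w}\cdots\text{ad}_{\ell_N^w}(T)$ are uniformly bounded on $L^2$ at the expected symbolic weight, for $\ell_j$ linear symbols composed on the appropriate side with $\k$. This reduces the theorem to controlling a family of iterated commutators involving $W(t)$.

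Next I would apply an Egorov-type theorem to both propagators. Under Assumption~\ref{ass0aa} with $\m=2$ and global nontrapping, the Hamilton flows $\exp(tH_k)$ and $\exp(tH_{p_0})$ preserve suitable symbol classes on $T^*\re^n$ with uniform estimates on compact $t$-intervals. Since $V$ is of order zero in $\x$ when $\m=2$, the principal symbol of $H$ is the kinetic energy $k$, and Egorov yields
\[
e^{-itH_0}\, a^w\, e^{itH_0} \equiv (a\circ\exp(-tH_{p_0}))^w,\qquad e^{itH}\, a^w\, e^{-itH} \equiv (a\circ\exp(tH_k))^w,
\]
modulo pseudodifferential errors of lower symbolic order. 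Composing these,
\[
W(t)^{-1}\, a^w\, W(t) \equiv \bigpare{a \circ \exp(-tH_{p_0}) \circ \exp(tH_k)}^w,
\]
so that the canonical transformation of $W(t)$ is controlled by the flow composition $\exp(-tH_{p_0}) \circ \exp(tH_k)$. The intertwining property $\exp(tH_{p_0})\circ w_\pm = w_\pm\circ\exp(tH_k)$ of the classical scattering map, together with global nontrapping, is what allows this canonical transformation to be identified with $w_\pm$ in the sense required for $t\in\re_\pm$.

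The main obstacle is establishing the Egorov theorem with enough uniformity that the iteration of commutators demanded by the Beals-type criterion closes: one needs not just a single conjugation identity, but symbol-class control on conjugations iterated arbitrarily many times. Here the nontrapping hypothesis prevents unbounded spatial excursions of bicharacteristics, while the decay rate $\m=2$ ensures that each Egorov remainder stays in the correct symbol class through the induction. In contrast to the direct oscillatory-integral constructions of Hassell and Wunsch \cite{HW1,HW2}, this abstract approach sidesteps the analysis of the caustics of the full Hamilton flow, at the cost of a less explicit description of the Schwartz kernel of $W(t)$.
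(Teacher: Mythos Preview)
Your overall strategy---Beals-type characterization plus Egorov---matches the paper's, but there are two genuine gaps in the execution.

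First, applying Egorov separately to $e^{itH_0}$ and $e^{itH}$ does not work in the standard classes $S^m_{1,0}$. The geodesic flow $\exp(tH_k)$ moves points roughly like $(x,\x)\mapsto(x+t\x,\x)$, so for $a\in S^1_{1,0}$ the pullback $a\circ\exp(tH_k)$ has the wrong $\x$-derivative behavior: each $\pa_\x$ produces a factor of $t$ rather than $\jap{\x}^{-1}$, and the symbol leaves $S^1_{1,0}$. Your claim that $\exp(tH_k)$ ``preserves suitable symbol classes'' therefore needs an actual class, and it must be one in which both the pseudodifferential calculus and the Beals criterion you want are available. The paper sidesteps this by treating $W(t)=e^{itH_0}e^{-itH}$ as a single evolution with time-dependent generator $L(t)=e^{itH_0}(H-H_0)e^{-itH_0}$. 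Its principal symbol $\ell_0(t,x,\x)=\tfrac12\sum(a_{jk}(x-t\x)-\d_{jk})\x_j\x_k$ is uniformly in $S^2_{1,0}$ because of the decay of $a_{jk}-\d_{jk}$, and the associated flow $w_0(t)=\exp(-tH_{p_0})\circ\exp(tH_k)$ does preserve $S^m_{1,0}$ (Lemma~\ref{lem0a01a}). The Egorov construction is carried out for this single evolution (Lemma~\ref{lem0302a}), giving $W(t)a^W W(t)^{-1}\equiv(a\circ w_0(t)^{-1})^W$ modulo $\OPS^0_{1,0}$.

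Second, the intertwining relation $\exp(tH_{p_0})\circ w_\pm=w_\pm\circ\exp(tH_k)$ does not identify the finite-time map $w_0(t)=\exp(-tH_{p_0})\circ\exp(tH_k)$ with $w_\pm$; it only says $\exp(-tH_{p_0})\circ w_\pm\circ\exp(tH_k)=w_\pm$, which is a different composition. What is actually needed is the quantitative estimate of Lemma~\ref{lem0a01a}, namely $w_0(t)-w_\pm=O(\jap{t\x}^{-(\m-1)})$ with all derivatives, so that for fixed $t\neq 0$ one gets $a\circ w_0(t)^{-1}-a\circ w_\pm^{-1}\in S^{2-\m}_{1,0}$. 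This is precisely where $\m=2$ enters: it makes that difference land in $S^0_{1,0}$, hence absorbable into the lower-order remainder. Your sketch invokes $\m=2$ only to say that $V$ is of order zero in $\x$, but that is not the crucial point; the role of $\m=2$ is to control the rate at which $w_0(t)$ approaches $w_\pm$ in the symbol topology, and for $1<\m<2$ the theorem fails as stated (one must replace $w_\pm$ by the asymptotically homogeneous map $w(t)$, cf.\ Theorem~\ref{thm0102a}).
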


\begin{rem}\label{rem0101a}
We suppose the global nontrapping condition for the sake of simplicity. 
If we suppose $(x_0,\x_0)$ is forward nontrapping and $t>0$, then we can find 
a symbol $a(x,\x)\in S_{cl}^0(\re^n)$ such that $a_0(x_0,\x_0)\neq 0$ and 
$W(t)a(x,D_x)$ is an FIO associated to $w_+$ defined in a conic neighborhood of
$(x_0,\x_0)$. Here we have denoted the principal symbol of $a$ by $a_0$. 
The same generalization applies to the following theorems, but we do not discuss 
in detail. The proof of the above statement is same as that of the theorem. 
In fact, we need only to prove the theorem microlocally, and we prove the above 
claim to conclude the main result by using the microlocal partition of unity. 
\end{rem}

\begin{rem}\label{rem0102a}
Theorem~\ref{thm0101a} actually implies a propagation of singularity result. 
Namely, if we set 
\[
\L_\pm =\bigset{(y,\y,x,-\x)}{(y,\y)=w_\pm(x,\x)}\subset T^*\re^n\times T^*\re^n,
\]
then Theorem~\ref{thm0101a} implies $\WF(W(t))\subset \L_\pm$ for $t\in\re_\pm$, where 
$\WF(W(t))$ denotes the wave front set of the distribution kernel of $W(t)$. 
Thus, in turn, it implies
\[
\WF(W(t)u)\subset w_\pm(\WF(u))
\]
for $u\in L^2(\re^n)$ or $u\in \mathcal{E}'(\re^n)$. In fact, we have the equality 
in the above inclusion (\cite{Na1,Na2, IN1}).
\end{rem}

We now consider the general short-range case, i.e., the case when $1<\m<2$. 
Then we learn that Theorem~\ref{thm0101a} does not hold as it is, and we need to modify 
the definition of the FIOs. In \cite{Ho2}, an FIO is defined as an operator of which 
the distribution kernel is a Lagrangian distribution associated to a {\it conic}\/
Lagrangian submanifold. We need to employ a Lagrangian distribution associated to 
an {\it asymptotically conic}\/ Lagrangian manifold. Such Lagrangian submanifold 
is associated to an {\it asymptotically homogeneous}\/ canonical transform. 
We discuss these definitions in Section~4. 

We set
\[
w(t)=\exp(-tH_{p_0})\circ \exp(tH_p)\ :\ T^*\re^n\to T^*\re^n.
\]
We can show $w(t)(x,\x)=w_\pm(x,\x)+O(|\x|^{2-\m})$ as $|\x|\to\infty$ for 
$\pm t>0$, and hence $w(t)$ is asymptotically close to the homogeneous 
canonical transform $w_\pm$ if $\m>1$ (Appendix, Lemma~\ref{lem0a01a}). 

\begin{thm}\label{thm0102a}
Suppose Assumption~\ref{ass0aa} with $1<\m<2$, and suppose the global nontrapping 
condition. Then $W(t)$ is an FIO associated to $w(t)$. 
\end{thm}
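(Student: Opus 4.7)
The plan is to follow the strategy of Theorem~\ref{thm0101a}, but with the classical Fourier integral calculus replaced by the asymptotically homogeneous framework that will be introduced in Section~4. The central tool is a Beals-type characterization: an operator $A$ should be an FIO associated to an asymptotically homogeneous canonical transform $\chi$ if and only if, for every pseudodifferential symbol $a$, the conjugate $A^{*}a^w(x,D)A$ is again pseudodifferential with full symbol asymptotically equal to $a\circ\chi$ in the appropriate asymptotically classical class, together with analogous statements for iterated commutators with the generators of the calculus adapted to $\chi$.

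To verify this for $W(t)$ I would write
\[
W(t)^{*}\,a^w(x,D)\,W(t)=e^{itH}\bigl(e^{-itH_0}\,a^w(x,D)\,e^{itH_0}\bigr)e^{-itH}
\]
and apply two Egorov theorems in succession. The free Egorov theorem, which is effectively exact in a strong sense since $H_0$ has a purely quadratic symbol, turns the bracketed operator into a pseudodifferential operator with full symbol $a\circ\exp(-tH_{p_0})$. Applying then the Egorov theorem for $e^{-itH}$ developed in \cite{Na1,Na2,IN1} — which is available under the global nontrapping condition — yields a pseudodifferential operator whose principal symbol is
\[
a\circ\exp(-tH_{p_0})\circ\exp(tH_p)=a\circ w(t).
\]
Iterating this scheme for composites and for commutators with the adapted generators supplies the full Beals-type input demanded by Section~4.

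The main obstacle lies in identifying the symbol class in which the resulting full symbol lives. When $\m=2$, the map $w(t)$ is homogeneous and $a\circ w(t)$ is classical, but for $1<\m<2$ the correction
\[
w(t)(x,\x)-w_\pm(x,\x)=O(\abs{\x}^{2-\m}),
\]
provided by Lemma~\ref{lem0a01a}, is genuinely nontrivial in a homogeneous sense and has to be absorbed as a lower-order piece in the asymptotically classical symbol class. The technical heart of the proof is therefore to prove the corresponding derivative estimates
\[
\bigabs{\pa_x^\a\pa_\x^\b\bigpare{w(t)-w_\pm}(x,\x)}\leq C_{\a\b}\jap{\x}^{2-\m-\abs{\b}},
\]
uniformly in $t$ on compact intervals, and to check that $a\circ w(t)$ then belongs to the symbol class on which the characterization of Section~4 acts. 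This is also precisely the place where the short-range hypothesis $\m>1$ enters: for $\m\leq 1$ the remainder above fails to be lower order, and the FIO structure in the sense of Section~4 breaks down. Once these symbol estimates are in place, the argument reduces to composing the two Egorov theorems already established and invoking the Beals-type characterization, exactly as in the proof of Theorem~\ref{thm0101a}.
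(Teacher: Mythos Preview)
Your high-level plan --- verify an Egorov-type conjugation identity and feed it into the Beals-type characterization of Section~4 --- is the right one, and your discussion of why $\m>1$ is needed for $w(t)-w_\pm$ to be lower order is on target. However, the specific implementation you propose has a real gap.

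You factor $W(t)^{-1}a^w W(t)=e^{itH}\bigl(e^{-itH_0}a^w e^{itH_0}\bigr)e^{-itH}$ and propose to apply two Egorov theorems in succession. The free step is indeed exact, but the resulting Weyl symbol is $a(x-t\x,\x)$, and for $a\in S^1_{1,0}$ this is \emph{not} in $S^1_{1,0}$: differentiating in $\x$ produces $-t(\pa_x a)(x-t\x,\x)$, which gains no decay in $\jap{\x}$. So the intermediate operator is not a pseudodifferential operator in any standard H\"ormander class, and there is no off-the-shelf Egorov theorem for $e^{-itH}$ to apply to it. The references \cite{Na1,Na2,IN1} do not supply such a theorem; what they prove are Egorov-type results for $W(t)$ itself, precisely because $e^{-itH}$ alone is not an FIO (caustics, infinite propagation speed). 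Your two-step decomposition reintroduces exactly the difficulty that passing to $W(t)$ was designed to avoid.

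The paper's route is to work directly with the generator $L(t)=e^{itH_0}(H-H_0)e^{-itH_0}$ of $W(t)$ and solve the Heisenberg equation $\pa_t G(t)=i[L(t),G(t)]$ by transport equations, as in Section~3. The point is that the cancellation $H-H_0$ makes the symbol $\ell(t,x,\x)=\ell_0(t,x,\x)+V(x-t\x)$ well behaved: $\ell_0$ carries a factor $\jap{t\x}^{-\m}$ from $a_{jk}(x-t\x)-\d_{jk}$. The only modification relative to the $\m=2$ case is that one must take the leading term of the asymptotic solution to be $\g_0=a\circ w(t)^{-1}$ (the full flow, including $V$) rather than $a\circ w_0(t)^{-1}$. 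The reason is that $\tilde V(t,x,\x)=V(x-t\x)$ is $O(\jap{t\x}^{2-\m})$, which for $\m<2$ is not bounded and would otherwise pollute the remainder $r_0$; absorbing $V$ into the classical flow restores $r_0\in S^0_{1,0}$. The symbol estimates needed to show $a\circ w(t)^{-1}\in S^1_{1,0}$ are those of Lemma~\ref{lem0a02a}, not Lemma~\ref{lem0a01a}.
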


Remarks~\ref{rem0101a} and \ref{rem0102a} also apply to Theorem~\ref{thm0102a}. 

\medskip

Next we consider the wave operators. Here we suppose

\begin{ass}\label{ass0ba}
There exists $\m>1$ such that for any $\a\in\ze_+^n$ 
\[
\bigabs{\pa_x^\a\bigpare{a_{jk}(x)-\d_{jk}}}\leq C_\a\jap{x}^{-\m-|\a|}, 
\quad \bigabs{\pa_x^\a V(x)}\leq C_\a\jap{x}^{-\m-|\a|}
\]
for $x\in\re^n$ with some $C_\a>0$. 
\end{ass}

Under Assumption~\ref{ass0ba}, it is well-known that the wave operators:
\[
W_\pm =\slim_{t\to\pm\infty}e^{itH}e^{-itH_0}
\]
exist on $L^2(\re^n)$. 

\begin{thm}\label{thm0103a}
Suppose Assumption~\ref{ass0ba}, and suppose the global nontrapping condition. 
Then $W_\pm$ are FIOs associated to $w_\pm^{-1}$. 
\end{thm}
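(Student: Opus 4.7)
The plan is to deduce Theorem~\ref{thm0103a} from Theorems~\ref{thm0101a}--\ref{thm0102a} by a strong-operator limit argument combined with the Beals-type FIO characterization underlying their proofs. Observe first that Assumption~\ref{ass0ba} with $\m>1$ implies Assumption~\ref{ass0aa} with $\m'=\min(\m,2)$, so $W(t)=e^{itH_0}e^{-itH}$ is already known to be an FIO, associated to $w(t)$ (case $1<\m<2$) or to $w_\pm$ (case $\m\geq 2$). Its adjoint $W(t)^*=e^{itH}e^{-itH_0}$ is then an FIO associated to the inverse canonical relation, and by the very definition of the wave operators
\[
W_\pm=\slim_{t\to\pm\infty}W(t)^*.
\]

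To conclude that this strong limit is itself an FIO associated to $w_\pm^{-1}$, I would invoke the Beals-type characterization. The latter reduces the claim to verifying that, for $a$ in the appropriate symbol class, the intertwining error $W_\pm\text{Op}(a)-\text{Op}(a\circ w_\pm^{-1})\,W_\pm$ and its iterated multi-commutator analogs are of suitable lower order. I obtain these from the analogous relations for $W(t)^*$ through the decomposition
\begin{align*}
W_\pm\text{Op}(a)-\text{Op}(a\circ w_\pm^{-1})\,W_\pm
&=\bigpare{W(t)^*\text{Op}(a)-\text{Op}(a\circ w(t)^{-1})\,W(t)^*}\\
&\quad+\text{Op}\bigpare{a\circ w(t)^{-1}-a\circ w_\pm^{-1}}\,W(t)^*\\
&\quad+(W_\pm-W(t)^*)\text{Op}(a)-\text{Op}(a\circ w_\pm^{-1})(W_\pm-W(t)^*).
\end{align*}
The first bracket is a uniformly lower-order remainder from the FIO structure of $W(t)^*$ established in the previous theorems; the second tends to zero as $t\to\pm\infty$ at the symbol level via the pointwise convergence $w(t)^{-1}\to w_\pm^{-1}$ supplied by Lemma~\ref{lem0a01a} (in which $w(t)-w_\pm=O(|\x|^{2-\m})$); the third pair vanishes in the strong $L^2$ topology. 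The same scheme, iterated, produces the full multi-commutator estimates required by the Beals-type criterion.

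The principal technical obstacle is that strong $L^2$ convergence does not by itself preserve membership in an operator class, so one must extract uniform-in-$t$ quantitative remainder bounds from the Beals-type arguments used in the proofs of Theorems~\ref{thm0101a}--\ref{thm0102a} and combine them with the symbol-level asymptotic comparison of $w(t)^{-1}$ and $w_\pm^{-1}$ from Lemma~\ref{lem0a01a}. Once these uniform ingredients are assembled, the decomposition above delivers the FIO characterization of $W_\pm$, and Remark~\ref{rem0101a} then localizes the argument to the microlocal conic neighborhoods needed when the nontrapping assumption is only partial.
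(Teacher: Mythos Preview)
Your scheme has a real gap, most visibly in the second term of the decomposition when $1<\m<2$. The symbol $a\circ w(t)^{-1}-a\circ w_\pm^{-1}$ obeys (via Lemma~\ref{lem0a01a}) only $\bigabs{\pa_z^\a\pa_\y^\b(\cdot)}\leq C\jap{\y}^{1-|\b|}\jap{t\y}^{1-\m}$, which places it in $S^{2-\m}_{1,0}$, not $S^0_{1,0}$; for $\m<2$ the corresponding operator is unbounded on $L^2$, so it does not ``tend to zero'' in any norm compatible with the Beals criterion. The third term is equally problematic: strong convergence of $W(t)^*\to W_\pm$ gives no control on $(W_\pm-W(t)^*)\text{Op}(a)$ in $\mathcal{L}(L^2_{cpt},L^2_{loc})$ since $a\in S^1$, and for iterated commutators the situation only deteriorates. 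Finally, Theorems~\ref{thm0101a}--\ref{thm0102a} assert, for each fixed $t$, that $W(t)$ is an FIO; they give no uniformity in $t$ of the remainders, so the claim that ``the first bracket is a uniformly lower-order remainder'' is exactly what must be proved.

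The paper proceeds differently and more directly: it reruns the Egorov construction of Lemma~\ref{lem0302a} while tracking the $t$-decay of every remainder, showing that each $r_j$ in the asymptotic expansion satisfies $|r_j(t,\cdot,\cdot)|_{-j,L,K}\leq C\jap{t}^{-\m}$ and that the full remainder $R(t)$ obeys $\norm{R(t)}_{\mathcal{L}(H^{-N},H^N)}\leq C_N\jap{t}^{-\m}$ (Lemma~\ref{lem0501a}). Because $\m>1$ this is integrable on $\re$, so the Duhamel identity
\[
W(t)^{-1}G(t)-a^W\!(x,D_x)W(t)^{-1}=\int_0^t W(s)^{-1}R(s)W(s)W(t)^{-1}\,ds
\]
converges absolutely in every $\mathcal{L}(H^{-N},H^N)$ as $t\to\pm\infty$, while $G(t)\to\g_\pm^W$ in $\OPS^1_{1,0}$. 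This yields $W_\pm\g_\pm^W-a^W W_\pm=$ (smoothing), and since $\g_\pm-a\circ w_\pm^{-1}\in S^0_{1,0}$, Corollary~\ref{cor0202a} finishes. The crucial ingredient you are missing is precisely this integrable-in-$t$ decay of the Egorov remainder; your ``uniform ingredients'' allusion points at it, but obtaining it is the entire content of the proof, and it cannot be extracted from the statements of Theorems~\ref{thm0101a}--\ref{thm0102a} alone.
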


Note that $w_\pm$ is homogeneous in $\x$, and $W_\pm$ are FIOs 
in the sense of H\"ormander even if $\m<2$. 

\medskip

We prove our main results combining a Beals-type characterization of FIOs and 
Egorov-type theorems, which are variations of corresponding theorems proved in 
\cite{Na1, Na2, IN1} to characterize the wave front set of solutions to 
Schr\"odinger equations. We discuss the Beals-type characterization theorem in 
Section~2, and then its generalization to FIOs associated to asymptotically 
homogeneous canonical transforms in Section~4. Using these, we prove Theorem~\ref{thm0101a}
and Theorem~\ref{thm0102a} in Section~3 and Section~4, respectively. 
Application of these idea to wave operators (Theorem~\ref{thm0103a}) is discussed in 
Section~5. Several technical lemmas are proved in 
Appendix~A. 

\medskip

The fundamental solution to Schr\"odinger equation with the flat Laplacian as the principal terms 
has been studied by many authors, for example Fujiwara \cite{Fu} and Yajima~\cite{Y}. 
In this case, a global construction of the fundamental solution is known, and it was applied to 
various estimates, for example dispersive estimates of the Schr\"odinger evolution group. 

On the other hand, not much has been known about the fundamental solution to the 
Schr\"odinger equation with variable coefficients. The local regularity properties of the 
fundamental solution under nontrapping condition is known for some time, but it is not 
enough to characterize the singularities of solutions to the Schr\"odinger equation (cf., e.g., 
Kapitanski-Safarov \cite{KS}), because the equation has infinite propagation speed. 
The first step of the analysis of microlocal singularity for the equation was carried out by 
Craig, Kappeler and Strauss \cite{CKS}. They proved microlocal smoothing property of the equation, 
and thus gave a sufficient condition for the microlocal regularity of solutions. 
The result has been improved or generalized by Wunsch \cite{W}, Robbiano-Zuily \cite{RZ}, 
Nakamura \cite{Na0}, Ito \cite{I}, Martinez-Nakamura-Sordoni \cite{MNS1}, etc. 
Then a complete characterization of the microlocal singularities of solutions was 
given by Hassell and Wunsch \cite{HW1,HW2} by constructing a parametrix as 
a Legendre distribution on scattering manifolds. 
The result was later generalized by Nakamura \cite{Na1, Na2}, Ito-Nakamura \cite{IN1}
and Martinez-Nakamura-Sordoni \cite{MNS2}. In these papers, the authors do not construct a parametrix, 
but instead use Egorov-type theorems to obtain the characterization of singularities of solutions. 

In this paper, we show that these Egorov-type theorems actually imply that the fundamental 
solution is written in terms of  an FIO and the free evolution operator. 
Our result is analogous to results by Hassell and Wunsch \cite{HW1,HW2}, but the formulation
is quite different, and we work in the standard framework of FIOs and Lagrangian distributions, 
combined with classical mechanical scattering theory. Our model is restricted to Schr\"odinger 
operators on the Euclidean space, but our assumption on the perturbation is weaker than theirs. 
In particular, we consider general {\it short-range} perturbations, not necessarily having 
an asymptotic expansion starting from $O(|x|^{-2})$ terms.  We may also include operators 
with unbounded potential terms. 
Our method can be applied also to operators on scattering manifolds, and we discuss them 
in a forthcoming paper. 
\medskip

We use the following notation throughout this paper:
For function spaces $X$, $Y$, $\mathcal{L}(X,Y)$ denotes the linear space 
of the continuous linear maps from $X$ to $Y$.
We write $\re_+=(0,\infty)$ and $\re_-=(-\infty,0)$. Also we write
$\ze_+=\{0,1,2,\dots\}$.  
For $u\in \mathcal{S}(\re^n)$, we denote the Fourier transform by 
\[
\hat u(\x)=\mathcal{F}u(\x) = (2\pi)^{-n/2}\int_{\re^n} e^{-ix\cdot\x}f(x)\,dx.
\]
$\mathcal{F}$ is extended to a map from $\mathcal{S}'(\re^n)$ to $\mathcal{S}'(\re^n)$.
 We denote the standard symbol class of pseudodifferentail operators as follows: 
 We write $a\in S^m_{\rho,\d}(\re^n)$ if $a(x,\x)\in C^\infty(\re^n\times\re^n)$ and for any
 $\a,\b\in\ze_+^n$, 
 \[
 \bigabs{\pa_x^\a\pa_\x^\b a(x,\x)}\leq C_{\a\b}\jap{\x}^{m+\d|\a|-\rho|\b|}, \quad x,\x\in\re^n,
 \]
 with some $C_{\a\b}>0$. We write $a\in S_{cl}^m(\re^n)$ if $a\in S^m_{1,0}$ and $a$ has 
 an asymptotic expansion:
 \[
 a(x,\x)\sim \sum_{j=0}^\infty a_j(x,\x), \quad \text{as }|\x|\to\infty,
 \]
 where $a_j(x,\x)$ are homogeneous of order $(m-j)$ in $\x$. For a symbol $a(x,\x)$, 
 the pseudodifferential operator $a(x,D_x)$ is defined by 
 \[
 a(x,D_x)u(x) =(2\pi)^{-n/2}\int_{\re^n} e^{ix\cdot\x} a(x,\x) \hat u(\x)\,d\x,
 \quad u\in\mathcal{S}(\re^n), 
 \]
 and the Weyl quantization is defined by
 \[
 a^W(x,D_x)u(x) =(2\pi)^{-n}\int_{\re^{2n}} e^{i(x-y)\cdot\x} a\bigpare{\tfrac{x+y}{2},\x} 
 u(y)\,dy\,d\x.
\]
For $(x_0,\x_0)\in T^*M$, $\O\subset T^* M$ is called a {\it conic neighborhood}\/ of 
$(x_0,\x_0)$ if $\O$ is a neighborhood such that if $(x,\x)\in\O$ then $(x,\l \x)\in\O$ 
for any $\l\in[1,\infty)$. A conic set is called {\it narrow}\/ if for any $(x,\x), (x,\y)\in \O$, 
($\x,\y\neq 0$), $\hat \x\cdot\hat\y>a$ with fixed $a>0$, where $\hat \x=\x/|\x|$. 
 

\section{Beals-type characterization of Fourier integral operators}

In this section we prove a Beals-type characterization theorem for Fourier 
integral operators. At first we recall several standard definitions. 

Let $\L\subset T^*\re^m$ be a smooth $m$-dimensional submanifold. 
$L$ is called {\it Lagrangian}\/ if the pull back of the standard canonical 
form vanishes on $\L$, i.e., $i^*(d\x\wedge dx)=0$ on $T^*\L$. 
$\L$ is called {\it conic}\/  if $(x,\x)\in\L$ implies $(x,\l \x)\in\L$ for $\l>0$. 

\begin{defn}[Besov space $B^{\s,\infty}_2(\re^n)$]\label{defn0201a}
Let $\s\in\re$ and let $u\in \mathcal{S}'(\re^m)$ such that $\hat u\in L^2_{loc}(\re^m)$. 
Then we set 
\[
\norm{u}_{B^{\s,\infty}_2}= \biggpare{\int_{|\x|\leq 1}|\hat u(\x)|^2 d\x}^{1/2}
+\sup_{j\geq 0} \biggpare{\int_{2^j\leq |\x|\leq 2^{j+1}} \bigabs{2^{\s j}\hat u(\x)}^2d\x}^{1/2}
\]
Then $B^{\s,\infty}_2(\re^m)$ is defined by
\[
B^{\s,\infty}_2(\re^m)=\bigset{u\in \mathcal{S}'(\re^m)}{\norm{u}_{B^{\s,\infty}_2}<\infty}
\]
and 
\[
B^{\s,\infty}_{2,loc}(\re^m) =\bigset{u\in\mathcal{D}'(\re^m)}{
\f u\in B^{\s,\infty}_2(\re^m) \text{ for any } \f\in C_0^\infty(\re^m)}.
\]
\end{defn}

According to H\"ormander \cite{Ho2} (see also Sogge \cite{So}), the Lagrangian 
distribution is defined as follows: 

\begin{defn}[Lagrangian distribution]\label{defn0202a}
Let $\L\subset T^*\re^m\setminus 0$ be a conic Lagrangian submanifold, 
$u\in \mathcal{S}'(\re^m)$ and $\s\in\re$. 
$u$ is called {\it Lagrangian distribution} associated to $\L$ of order $\s$ 
if for any $p_1,\dots, p_N\in S^1_{cl}(\re^m)$ such that 
the principal symbols of $p_j$ vanish on $\L$ ($j=1,2,\dots, N$), 
\[
p_1(x,D_x)p_2(x,D_x)\cdots p_N(x,D_x)u\in B^{-\s-m/4,\infty}_{2,loc}(\re^m), 
\]
and we denote $u\in I^\s(\re^m,\L)$.
\end{defn}

\begin{defn}[Fourier integral operators]\label{defn0203a}
Let $S$ be a canonical transform from $T^*\re^n$ to $T^*\re^n$, and suppose 
$S$ is homogeneous of order 1 with respect to $\x$. Let 
\[
\L_S=\bigset{(y,x,\y,-\x)}{(y,\y)=S(x,\x), (x,\x)\in T^*\re^n\setminus 0}\subset 
T^*\re^{2n}\setminus 0.
\]
Let $U\in \mathcal{L}(\mathcal{S}(\re^n), \mathcal{S}'(\re^n))$ and let 
$u\in \mathcal{S}'(\re^{2n})$ be its distribution kernel. 
Then $U$ is called a {\it Fourier integral operator}\/  associated to $S$ if 
$u\in I^\s(\re^{2n},\L_S)$. 
\end{defn}

Note $\L_S$ is a conic Lagrangian submanifold since $S$ is a homogeneous 
canonical transform.

\begin{rem}\label{rem0201a}
If $U$ is a Fourier integral operator, it is known that there is $m\leq 2n$, 
a phase function $\G(x,\th,y)$ ($x,y\in\re^n$, $\th\in\re^m$), which is homogeneous 
of order 1 in $\x$, and a symbol $a(x,\th,y)\in S^{\s+n/2-m/2}_{1,0}$ such that 
\[
U\f(x)=(2\pi)^{-n/2+m/2} \int_{\re^m\times\re^n} e^{i\G(x,\th,y)} a(x,\th,y)\f(y)dyd\th
\]
and we have a familiar representation of an FIO (see \cite{Ho2} or
\cite{So} for the detail). 
\end{rem}

We give a characterization of FIOs in terms of conjugation 
of operators. Let $S$ be a canonical transform and 
$U\in \mathcal{L}(\mathcal{S}(\re^n),\mathcal{S}'(\re^n))$ as above. 
Let $a\in S^m_{1,0}(\re^n)$ ($m\in\re$) such that 
\begin{equation}\label{eq0201a}
\bigset{x}{a(x,\x)\neq 0 \text{ for some }\x\in\re^n}\Subset \re^n, \quad 
\supp a\cap (\re^n\times\{0\})=\emptyset.
\end{equation}
For such $a$, we set
\[
Ad_S(a)U= (a\circ S^{-1})(x,D_x) U -U a(x,D_x)\ : \ \mathcal{S}\to \mathcal{S}'.
\]

\begin{thm}\label{thm0201a}
Let $S$ be as above, and let $U\in\mathcal{L}(\mathcal{S}(\re^n),\mathcal{S}'(\re^n))$. 
$U$ is an FIO of order 0 associated to $S$ if and only if for any 
$a_1, a_2,\dots,a_N\in S^1_{cl}(\re^n)$ satisfying \eqref{eq0201a}, 
\begin{equation}\label{eq0202a}
Ad_S(a_1)Ad_S(a_2)\cdots Ad_S(a_N)U\in \mathcal{L}\bigpare{L^2_{cpt}(\re^n),
L^2_{loc}(\re^n)}.
\end{equation}
\end{thm}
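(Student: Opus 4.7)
My plan is to handle the two directions of the equivalence separately, using the FIO/PsDO symbol calculus for the necessity and an adaptation of the classical Beals characterization --- via intertwining by a reference FIO --- for the sufficiency.

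For necessity, suppose $U\in I^0(\re^{2n},\L_S)$. The Egorov identity $(a\circ S^{-1})\circ S=a$ on $\L_S$ forces the two compositions $(a\circ S^{-1})(x,D_x)\,U$ and $U\,a(x,D_x)$, each an FIO of order $1$ associated to $S$, to share the same principal symbol on $\L_S$; hence $Ad_S(a)\,U$ drops back to an FIO of order $0$, and iterating $N$ times preserves order $0$. Order-$0$ FIOs associated to a homogeneous canonical graph are bounded $L^2_{cpt}\to L^2_{loc}$ --- the compact $x$-support required in \eqref{eq0201a} keeps everything localized --- which yields \eqref{eq0202a}.

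For sufficiency I would construct, microlocally in a conic neighborhood of any chosen point of $T^*\re^n\setminus 0$, a model FIO $F_0$ associated to $S$ with nonvanishing principal symbol together with a parametrix $G_0$ satisfying $F_0G_0\equiv G_0F_0\equiv I$ modulo smoothing; this is a standard generating-function construction. Setting $A=UG_0$ and invoking the Egorov identity $G_0\,a(x,D_x)\equiv(a\circ S)(x,D_x)\,G_0$ modulo one-order-lower operators, a single commutator becomes
\[
 [a(x,D_x),A]=Ad_S(a\circ S)\,U\cdot G_0+R_1,
\]
where $a\circ S$ still satisfies \eqref{eq0201a} and $R_1$ is bounded $L^2_{cpt}\to L^2_{loc}$. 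By iteration each multi-commutator $[a_1(x,D_x),[a_2(x,D_x),\ldots,[a_N(x,D_x),A]\ldots]]$ unfolds into a finite linear combination of terms of the form $P\cdot Ad_S(b_1)\cdots Ad_S(b_k)\,U\cdot G_0$ plus bounded remainders, so \eqref{eq0202a} makes every such nested commutator bounded from $L^2_{cpt}$ to $L^2_{loc}$. A Beals-type characterization of $\OPS^0$ then identifies $A$ as a pseudodifferential operator of order $0$, and $U\equiv AF_0$ is the required composition of a PsDO of order $0$ with a model order-$0$ FIO associated to $S$.

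The hard part will be the final Beals step: the admissible test symbols here are required to lie in $S^1_{cl}(\re^n)$ and to be supported compactly in $x$ and away from $\x=0$, which is not the classical Beals class of $x_j,\,D_j$ commutators. I would verify that this restricted class still suffices to characterize $\OPS^0$ by choosing $a(x,\x)=\chi(x)\,\x_j\,\psi(\x)$ to recover $ad(D_j)$ on the region $\{\chi=\psi=1\}$, and analogous constructions to produce position-type commutators microlocally; a microlocal partition of unity then assembles the local conclusions into a global one. Construction of the parametrix $G_0$ is a subsidiary but routine difficulty, standard once $F_0$ has nonvanishing principal symbol on the relevant conic region, and it is what dictates the microlocal (rather than global) setup of the reduction to Beals.
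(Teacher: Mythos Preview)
Your necessity argument coincides with the paper's. For sufficiency, however, the paper takes a markedly different and more direct route. Rather than reducing to the pseudodifferential Beals theorem via a model FIO, the paper works directly with Definition~\ref{defn0202a}: it must show that for every $p_1,\dots,p_N\in S^1_{cl}(\re^{2n})$ whose principal symbols vanish on $\L_S$, the iterated application $p_1(y,D_y,x,-D_x)\cdots p_N(y,D_y,x,-D_x)u$ lands in $B^{-n/2,\infty}_{2,loc}(\re^{2n})$. Two lemmas carry the weight. Lemma~\ref{lem0203a} shows, by a Hilbert--Schmidt trace argument, that boundedness $L^2_{cpt}\to L^2_{loc}$ of an operator forces its distribution kernel into $B^{-n/2,\infty}_{2,loc}$. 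Lemma~\ref{lem0204a} shows that any $p\in S^1_{cl}(\re^{2n})$ vanishing on $\tilde\L_S$, after localization to a small conic neighborhood of a point of $\tilde\L_S$, decomposes modulo $S^0$ as $\sum_j b_j\bigl((f_j\circ S^{-1})(y,\y)-f_j(x,\x)\bigr)$ with $b_j\in S^0_{cl}(\re^{2n})$ and $f_j\in S^1_{cl}(\re^n)$ satisfying \eqref{eq0201a} (concretely $f_j=x_j|\x|\rho$ and $f_{n+j}=\x_j\rho$ for a suitable cutoff $\rho$). Applying such a $p$ to the kernel is then exactly applying $Ad_S(f_j)$ to $U$, so the hypothesis \eqref{eq0202a} together with Lemma~\ref{lem0203a} gives the required Besov regularity directly.

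Your strategy is a legitimate alternative and is the classical way FIO Beals theorems are sometimes proved, but it purchases its conceptual tidiness at the price of more machinery: a microlocal elliptic FIO $F_0$ and its parametrix $G_0$, careful bookkeeping of the Egorov remainders under iteration (each remainder is an order-$0$ FIO associated to $S^{-1}$, not a smoothing operator, so the induction must track products of $Ad_S(\cdot)U$ with such FIOs), a localized pseudodifferential Beals characterization adapted to test symbols satisfying \eqref{eq0201a} (which, as you correctly flag, excludes $x_j$ and $D_j$ themselves and requires a nontrivial substitute argument), and finally a microlocal patching together with the standard equivalence between Definition~\ref{defn0202a} and the oscillatory-integral description. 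The paper sidesteps all of this by exploiting that H\"ormander's definition of a Lagrangian distribution is \emph{already} a Beals-type condition on $\re^{2n}$; the entire sufficiency argument then reduces to the purely algebraic Lemma~\ref{lem0204a} and the functional-analytic Lemma~\ref{lem0203a}.
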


The next corollary gives convenient sufficient conditions. 

\begin{cor}\label{cor0202a}
(i) Let $S$ and $U$ be as in Theorem~\ref{thm0201a}. If for any $a\in S^1_{cl}(\re^n)$ 
satisfying \eqref{eq0201a}, there is $b\in S_{1,0}^0(\re^n)$ such that 
\[
Ad_S(a)U =b(x,D_x) U +R
\]
with a smoothing operator $R$, then $U$ is an FIO associated to $S$. \newline
(ii) Let $S$ and $U$ as above, and suppose $U$ is invertible. 
If for any $a\in S_{cl}^1(\re^n)$ satisfying \eqref{eq0201a}  there is $b\in S_{cl}^0(\re^n)$ 
such that 
\[
U a(x,D_x)U^{-1}=(a\circ S^{-1})(x,D_x) + b(x,D_x), 
\]
then $U$ is an FIO of order 0 associated to $S$. 
\end{cor}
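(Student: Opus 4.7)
The plan is to reduce both assertions to an application of Theorem~\ref{thm0201a} by showing, under the hypothesis of (i), that every iterated expression takes the form
\[
Ad_S(a_1)Ad_S(a_2)\cdots Ad_S(a_N)U = B_N(x,D_x)U+R_N
\]
with $B_N\in S^0_{1,0}(\re^n)$ and $R_N$ smoothing. Operators of this form automatically belong to $\mathcal{L}\bigpare{L^2_{cpt}(\re^n),L^2_{loc}(\re^n)}$ once $U$ itself does, since $B_N(x,D_x)$ is $L^2$-bounded by Calder\'on--Vaillancourt and smoothing operators are arbitrarily regularizing. Theorem~\ref{thm0201a} will then identify $U$ as a zero-order FIO associated to $S$.

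Part (ii) reduces immediately to (i): multiplying the assumed identity on the right by $U$ rewrites it as $Ad_S(a)U=-b(x,D_x)U$, precisely the hypothesis of (i) with vanishing remainder, and the invertibility of $U$ on $L^2$ supplies the baseline $L^2_{cpt}\to L^2_{loc}$ boundedness needed for the reduction. For (i) I would argue by induction on $N$. The case $N=1$ is the given hypothesis. For the step, expand $Ad_S(a_{N+1})\bigpare{B_N(x,D_x)U+R_N}$, use the identity
\[
B_N(x,D_x)Ua_{N+1}(x,D_x) = B_N(x,D_x)(a_{N+1}\circ S^{-1})(x,D_x)U - B_N(x,D_x)Ad_S(a_{N+1})U
\]
to move $a_{N+1}$ past $U$, and substitute the $N=1$ hypothesis applied to $a_{N+1}$ for $Ad_S(a_{N+1})U=b(x,D_x)U+R'$. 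After simplification the $U$-factor acquires the symbol
\[
[(a_{N+1}\circ S^{-1})(x,D_x),B_N(x,D_x)]+B_N(x,D_x)b(x,D_x),
\]
which lies in $S^0_{1,0}$: the commutator of an $S^1_{1,0}$ and an $S^0_{1,0}$ symbol is in $S^0_{1,0}$ because the leading $\x$-power cancels by antisymmetry, and the product of two $S^0_{1,0}$ symbols is in $S^0_{1,0}$. The leftover contributions $(a_{N+1}\circ S^{-1})(x,D_x)R_N$, $R_Na_{N+1}(x,D_x)$ and $B_N(x,D_x)R'$ are each a composition of a pseudodifferential operator with a smoothing one and therefore remain smoothing, closing the induction. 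Invoking Theorem~\ref{thm0201a} then yields the conclusion.

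The main obstacle I anticipate is bookkeeping rather than conceptual: one must treat $(a\circ S^{-1})(x,D_x)$ as a bona fide pseudodifferential operator even though $a\circ S^{-1}$ typically fails to have compact $x$-support once the canonical transformation $S^{-1}$ mixes position and momentum, so the standard composition and mapping properties used to absorb smoothing remainders must be invoked with care about proper support. One must also justify the $L^2_{cpt}\to L^2_{loc}$ boundedness of $U$ itself as the base case of the Theorem~\ref{thm0201a} criterion; this is free in case (ii) from the $L^2$-invertibility, while in case (i) it should follow from a direct pseudodifferential argument applied to the $N=1$ hypothesis, exploiting the $L^2$-boundedness of zero-order symbols.
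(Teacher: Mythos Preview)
Your proposal is correct and follows essentially the same route as the paper: an induction showing $Ad_S(a_1)\cdots Ad_S(a_N)U=B_N(x,D_x)U+R_N$ with $B_N\in S^0_{1,0}$ and $R_N$ smoothing, then an appeal to Theorem~\ref{thm0201a}; the paper writes out the case $N=2$ explicitly and says ``repeating this procedure'' for the rest, while you phrase the same computation as a general inductive step (your sign on the $B_N b$ term is in fact the correct one). Your remarks about the $N=0$ baseline $L^2_{cpt}\to L^2_{loc}$ boundedness and about proper supports of $(a\circ S^{-1})(x,D_x)$ are legitimate side issues that the paper leaves implicit.
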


\begin{proof} 
(i) The condition \eqref{eq0202a} with $N=1$ follows immediately from the assumption
of (i). Let $N=2$, and let $Ad_S(a_j)U=b_j(x,D_x)U +R_j$, $j=1,2$. Then we have 
\begin{align*}
Ad_S(a_1)Ad_S(a_2)U &= (a_1\circ S^{-1})(x,D_x) b_2(x,D_x) U
 -b_2(x,D_x) U a_1(x,D_x)\\
&\qquad+Ad_S(a_1) R_2\\
&= \bigbrac{(a_1\circ S^{-1})(x,D_x), b_2(x,D_x)}U-b_2(x,D_x)b_1(x,D_x) U \\
&\qquad  +Ad_S(a_1)R_2 + b_2(x,D_x)R_1 \\
&= b_{12}(x,D_x)U +R_{12}
\end{align*}
where $b_{12}\in S^0_{1,0}(\re^n)$ and $R_{12}$ is a smoothing operator. 
Repeating this procedure, we conclude \eqref{eq0202a} for any $N$. 
Now  (ii) follows easily from (i). 
\end{proof}

In order to prove Theorem~\ref{thm0201a}, we first notice that the $L^2_{cpt}$-$L^2_{loc}$ 
boundedness implies the distribution kernel is locally $B^{-n/2,\infty}_2$. 

\begin{lem}\label{lem0203a}
Suppose $U\in \mathcal{L}(L^2_{cpt}\bigpare{\re^n),L^2_{loc}(\re^n)}$, and let 
$u\in \mathcal{S}'(\re^n)$ be its distribution kernel. 
Then $u\in B^{-n/2,\infty}_{2,loc}(\re^n)$.
\end{lem}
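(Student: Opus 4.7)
The plan is to reduce the statement to a Hilbert--Schmidt estimate for a Fourier-truncated version of $U$. By the definition of $B^{-n/2,\infty}_{2,loc}(\re^{2n})$, it is enough to verify $\f u\in B^{-n/2,\infty}_2(\re^{2n})$ for every $\f\in C_0^\infty(\re^{2n})$. Any such $\f$ is supported in some $K_1\times K_2$, so I can pick $\chi_1,\chi_2\in C_0^\infty(\re^n)$ with $\chi_1(x)\chi_2(y)\equiv 1$ on $\supp\f$, and, using stability of $B^{-n/2,\infty}_2$ under multiplication by $C_0^\infty$, reduce to showing that $u_0(x,y):=\chi_1(x)\chi_2(y)u(x,y)$ lies in $B^{-n/2,\infty}_2(\re^{2n})$. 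The crucial point is that $u_0$ is the Schwartz kernel of $U_0:=M_{\chi_1}UM_{\chi_2}$, which is bounded on $L^2(\re^n)$ by hypothesis; moreover $\hat u_0$ is entire by Paley--Wiener, so the low-frequency contribution $\int_{|\zeta|\le 1}|\hat u_0|^2\,d\zeta$ is automatic.

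The core of the argument is the dyadic bound
\[
\int_{2^j\le|\zeta|\le 2^{j+1}}|\hat u_0(\zeta)|^2\,d\zeta\;\le\;C\,2^{nj}, \qquad j\ge 0.
\]
I would pick $\chi\in C_0^\infty(\re^n)$ with $\chi\equiv 1$ on $\{|\zeta|\le 1\}$, set $r=2^{j+1}$, and use that the shell $\{|\zeta|\sim 2^j\}\subset\re^{2n}$ is contained in the product box $\{|\zeta_1|\le r\}\times\{|\zeta_2|\le r\}$. Then
\[
\int_{|\zeta|\sim 2^j}|\hat u_0|^2\,d\zeta\;\le\;\int\bigabs{\chi(\zeta_1/r)\chi(\zeta_2/r)\hat u_0(\zeta_1,\zeta_2)}^2\,d\zeta\;=\;\bignorm{\chi(D/r)U_0\chi(D/r)}_{\mathrm{HS}}^2,
\]
where in the last equality I use that the $L^2(\re^{2n})$-norm of an integral kernel on $\re^{2n}$ equals the Hilbert--Schmidt norm of the associated operator on $L^2(\re^n)$.

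To estimate the Hilbert--Schmidt norm, factor $U_0=M_{\chi_1}UM_{\chi_2}$ and apply $\|AB\|_{\mathrm{HS}}\le\|A\|_{\mathrm{HS}}\|B\|_{\mathrm{op}}$:
\[
\bignorm{\chi(D/r)U_0\chi(D/r)}_{\mathrm{HS}}\;\le\;\bignorm{\chi(D/r)M_{\chi_1}}_{\mathrm{HS}}\cdot\norm{U}_{L^2\to L^2}\cdot\norm{\chi_2}_{L^\infty}\cdot\bignorm{\chi(D/r)}_{L^2\to L^2}.
\]
The operator $\chi(D/r)M_{\chi_1}$ has integral kernel $\Psi_r(x-y)\chi_1(y)$ with $\Psi_r=\mathcal{F}^{-1}[\chi(\cdot/r)]$, and a direct Plancherel computation gives
\[
\bignorm{\chi(D/r)M_{\chi_1}}_{\mathrm{HS}}^2\;=\;\norm{\Psi_r}_{L^2}^2\norm{\chi_1}_{L^2}^2\;=\;r^n\norm{\chi}_{L^2}^2\norm{\chi_1}_{L^2}^2,
\]
while the remaining three factors are uniform in $r$. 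This yields the desired bound $Cr^n=C'2^{nj}$. The main obstacle — the step where no softer approach seems to suffice — is exactly this bridge from the operator-theoretic hypothesis (boundedness of $U$ on $L^2$) to an $L^2$-bound on the kernel's Fourier transform in a dyadic shell; the kernel $u$ need not be a function at all, as seen already from $U=\mathrm{Id}$, whose kernel $\delta(x-y)$ belongs to $B^{-n/2,\infty}_{2,loc}(\re^{2n})$ but not to $H^{-n/2}_{loc}$. The ``half-dimensional'' $r^{n/2}$ scaling of $\bignorm{\chi(D/r)M_{\chi_1}}_{\mathrm{HS}}$ is precisely what produces the critical Besov index $-n/2$.
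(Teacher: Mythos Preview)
Your argument is correct and follows essentially the same route as the paper's: localize the kernel by cutoffs in $x$ and $y$, identify the dyadic Fourier piece as (a bound on) the Hilbert--Schmidt norm of a frequency-truncated version of the localized operator, and then use the formula $\norm{a(x)b(D)}_{HS}=(2\pi)^{-n/2}\norm{a}_{L^2}\norm{b}_{L^2}$ to read off the $r^{n/2}$ scaling. The paper differs only in implementation: it uses annular Fourier cutoffs $\g_N(\x)=\g(2^{-N}\x)$ rather than your low-pass $\chi(\cdot/r)$, and it reaches the HS bound through a trace/H\"older manipulation (writing $I=\trace[A^*A]$ and estimating via $\norm{\chi\g^2(D)\chi}_{HS}$), whereas you factor directly with $\norm{AB}_{HS}\le\norm{A}_{HS}\norm{B}_{op}$. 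Your version is slightly more streamlined.

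One point to tighten: in your displayed inequality you write $\norm{U}_{L^2\to L^2}$, but $U$ is only assumed to map $L^2_{cpt}\to L^2_{loc}$, so this quantity is not defined. The factorization $\chi(D/r)M_{\chi_1}\cdot U\cdot M_{\chi_2}\chi(D/r)$ does not give a bounded middle factor, since $UM_{\chi_2}\chi(D/r)$ lands only in $L^2_{loc}$. The fix is immediate: insert $\tilde\chi_1\in C_0^\infty$ with $\tilde\chi_1\equiv 1$ on $\supp\chi_1$, write $M_{\chi_1}=M_{\chi_1}M_{\tilde\chi_1}$, and replace $\norm{U}_{L^2\to L^2}$ by $\norm{M_{\tilde\chi_1}UM_{\chi_2}}_{L^2\to L^2}$, which is finite by hypothesis. (This is exactly what the paper does with its auxiliary cutoff $\chi_1$.)
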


\begin{proof}
Let $u$ be the distribution kernel of $U$. 
Let $\chi,\g\in C_0^\infty(\re^n;\re)$, and we suppose $\chi$ is a even function. 
We consider
\[
I= \int_{\re^{2n}}\bigabs{\g(\x)\g(\y)\mathcal{F}[\chi(x)\chi(y)u(x,y)](\x,\y)}^2 d\x\,d\y.
\]
We choose $\chi_1\in C_0^\infty(\re^n)$ so that $\chi_1(x)=1$ on $\supp\chi$. 
We note $I$ can be expressed in terms of the Hilbert-Schmidt norm: 
\[
I=\bignorm{\g\mathcal{F} \chi U \chi \mathcal{F}^{-1}\g}_{HS}^2
\]
where $\g=\g(\x)$ and $\chi=\chi(x)$ denote the multiplication operators 
on $L^2(\re^n_\x)$ and $L^2(\re^n_x)$, respectively. $\norm{\cdot}_{HS}$ 
denotes the Hilbert-Schmidt norm. Then we represent the Hilbert-Schmidt 
norm by a trace:
\begin{align*}
I &= \trace\bigbrac{(\g\mathcal{F}\chi U\chi\mathcal{F}^{-1}\g)^*
(\g\mathcal{F}\chi U\chi\mathcal{F}^{-1}\g)}\\
&= \trace\bigbrac{\g\mathcal{F}\chi U^*\chi\mathcal{F}^{-1}\g^2
\mathcal{F}\chi U\chi\mathcal{F}^{-1}\g}\\
&=\trace\bigbrac{((\chi_1U\chi_1)^*\chi\mathcal{F}^{-1}\g^2\mathcal{F}\chi)
((\chi_1U\chi_1)\chi\mathcal{F}^{-1}\g^2\mathcal{F}\chi)}\\
&= \trace\bigbrac{((\chi_1U\chi_1)^*\chi\g^2(D_x)\chi)((\chi_1U\chi_1)\chi\g^2(D_x)\chi)}
\end{align*}
We use the H\"older inequality for the trace to obtain 
\begin{align*}
I&\leq \bignorm{(\chi_1U\chi_1)^*\chi \g^2(D_x)\chi}_{HS}
\bignorm{(\chi_1U\chi_1) \chi\g^2(D_x)\chi}_{HS}\\
&\leq \norm{\chi_1U\chi_1}_{\mathcal{L}(L^2)}^2\bignorm{\chi\g^2(D_x)\chi}_{HS}^2\\
&=(2\pi)^{-n}\norm{\chi_1U\chi_1}_{\mathcal{L}(L^2)}^2 \norm{\g}_{L^4}^4\norm{\chi}_{L^2}^2
\norm{\chi}_{L^\infty}^2.
\end{align*}
Here we have used the well-known properties: 
$\norm{AB}_{HS}\leq \norm{A}\,\norm{B}_{HS}$ and 
$\norm{a(x)b(D_x)}_{HS}=(2\pi)^{-n/2}\norm{a}_{L^2}\norm{b}_{L^2}$. 

Now we choose $\g\in C_0^\infty(\re^n)$ so that $\g(\x)=1$ for $1\leq |\x|\leq 2$, 
and we set
\[
\g_N(\x)=\g(2^{-N}\x), \quad \text{for }N=1,2,\dots, \text{ and }\x\in\re^n.
\]
We note $\norm{\g_N}_{L^4}^4=2^{nN}\norm{\g}_{L^4}^4$. 
Then, by the above estimate, we have 
\begin{align*}
&\iint_{2^N\leq|\x|,|\y|\leq 2^{N+1}} \bigabs{\mathcal{F}[\chi(x)\chi(y)u(x,y)](\x,\y)}^2
d\x d\y\\
&\qquad \leq \iint \bigabs{\g_N(\x)\g_N(\y)\mathcal{F}[\chi(x)\chi(y)u(x,y)](\x,\y)}^2 d\x d\y\\
&\qquad \leq (2\pi)^{-n}\norm{\chi_1U\chi_1}_{\mathcal{L}(L^2)}^2 
\norm{\g}_{L^4}^4 \norm{\chi}_{L^2}^2\norm{\chi}_{L^\infty}^2\times 2^{nN},
\end{align*}
and this implies $\chi(x)\chi(y) u(x,y)\in B^{-n/2,\infty}_2(\re^{2n})$ for any 
$\chi\in C_0^\infty(\re^n)$. 
\end{proof}

We set 
\[
\tilde\L_S=\bigset{(y,\y,x,\x)}{(y,\y)=S(x,\x)}\subset T^*\re^n\times T^*\re^n.
\]

\begin{lem}\label{lem0204a}
Let $p\in S^1_{cl}(\re^{2n})$ such that the principal symbol of $p$ vanishes on $\tilde\L_S$, 
and suppose $p$ is supported in a narrow convex conic neighborhood of 
$(S(x_0,\x_0),x_0,\x_0)\in \tilde \L_S$. Then there exist $b_j\in S^0_{cl}(\re^{2n})$, 
$f_j\in S^1_{cl}(\re^n)$ $(j=1,2,\dots 2n)$, and $r\in S^0_{cl}(\re^{2n})$ such that 
\[
p(y,\y,x,\x)=\sum_{j=1}^{2n}b_j(y,\y,x,\x)
\bigpare{(f_j\circ S^{-1})(y,\y)-f_j(x,\x)}+r(y,\y,x,\x).
\]
\end{lem}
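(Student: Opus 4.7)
My plan is to reduce the claim to a Taylor expansion of the principal symbol $p_0$ along functions of the form $g_j=(f_j\circ S^{-1})(y,\y)-f_j(x,\x)$ that locally cut out $\tilde\L_S$. First I would choose $f_1,\dots,f_{2n}\in S^1_{cl}(\re^n)$ whose differentials $df_j(x_0,\x_0)$ span $T^*_{(x_0,\x_0)}(T^*\re^n)$. A concrete admissible choice is $f_j(x,\x)=\x_j$ for $j=1,\dots,n$ and $f_{n+j}(x,\x)=\phi(x)(x_j-x_{0,j})\jap{\x}$ for $j=1,\dots,n$, with $\phi\in C_0^\infty(\re^n)$ identically $1$ on the base projections of $\supp p$ and of $S^{-1}(\supp p)$. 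Each such $f_j$ lies in $S^1_{cl}(\re^n)$, and a short computation at $(x_0,\x_0)$ gives $df_{n+j}=\phi(x_0)\jap{\x_0}\,dx_j$, so together with $df_j=d\x_j$ for $j\leq n$ the $2n$ differentials are linearly independent.

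With this choice in hand, define
\[
g_j(y,\y,x,\x)=(f_j\circ S^{-1})(y,\y)-f_j(x,\x)\in S^1_{cl}(\re^{2n}),
\]
which vanishes identically on $\tilde\L_S$ because $(y,\y)=S(x,\x)$ there. A tangent vector to $\tilde\L_S$ at the reference point has the form $(dS\cdot v,v)$, and substituting this into $dg_j$ immediately shows $dg_j$ annihilates $T\tilde\L_S$. The linear independence from Step~1 then promotes this to a basis of the conormal of $\tilde\L_S$ at $(y_0,\y_0,x_0,\x_0)$, a property that persists throughout a narrow conic neighborhood by homogeneity and continuity.

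Shrinking if necessary the narrow convex conic neighborhood so that it contains $\supp p$, I would introduce degree-$0$ homogeneous functions $z_1,\dots,z_{2n}$ that parametrize $\tilde\L_S$ there, so that $(g_{1,0},\dots,g_{2n,0},z_1,\dots,z_{2n})$ is a smooth chart on a conic open set $U$ with $\tilde\L_S\cap U=\bra{g_{1,0}=\dots=g_{2n,0}=0}$. Since $p_0$ is homogeneous of degree $1$, is supported in $U$, and vanishes when all $g_{j,0}=0$, convexity of $\supp p$ lets the segment from $(z,0)$ to $(z,g)$ stay in the chart and Taylor's formula with integral remainder gives
\[
p_0=\sum_{j=1}^{2n}b_{0,j}\,g_{j,0},\qquad b_{0,j}(g,z)=\int_0^1(\pa_{g_j}p_0)(sg,z)\,ds,
\]
with each $b_{0,j}$ smooth in $U$ and homogeneous of degree $0$ in the fiber variable. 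Choosing $b_j\in S^0_{cl}(\re^{2n})$ to be any classical symbol with principal part $b_{0,j}$ (cut off by a degree-$0$ function supported in $U$ and equal to $1$ on $\supp p$) and setting $r=p-\sum_{j=1}^{2n}b_j g_j$, the degree-$1$ parts on both sides agree by construction, so $r\in S^0_{cl}(\re^{2n})$ and the decomposition is complete.

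The main obstacle is the Taylor-expansion step: one must carry it out uniformly in a \emph{conic} set so that $b_{0,j}$ is a smooth, degree-$0$ homogeneous function on $\supp p$ rather than merely near a single point. The narrowness hypothesis is precisely what guarantees that a single chart of the form $(g_{j,0},z_k)$ covers $\supp p$, and the convexity hypothesis is exactly what lets the integration segment used in Taylor's formula remain inside the support, equivalently inside the coordinate chart.
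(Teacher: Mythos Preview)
Your approach is sound and shares the paper's core idea---Taylor expansion with integral remainder to factor the principal symbol through defining functions of $\tilde\L_S$---but the executions differ. You set up adapted coordinates $(g_{j,0},z_k)$ and expand $p_0$ in the $g$-variables in one step. The paper stays in the linear coordinates: writing $(z,\zeta)=S^{-1}(y,\eta)$, it Taylor-expands $p(y,\eta,x,\xi)-p(y,\eta,z,\zeta)$ along the straight segment from $(z,\zeta)$ to $(x,\xi)$ to obtain coefficients of $x_j-z_j$ and $\xi_j-\zeta_j$, and then runs a \emph{second} Taylor expansion to rewrite each of $x_j-z_j$, $\xi_j-\zeta_j$ in terms of $(f_j\circ S^{-1})(y,\eta)-f_j(x,\xi)$ for the explicit choices $f_j=x_j|\xi|\rho$, $f_{n+j}=\xi_j\rho$. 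The two-step linear route makes the symbol estimates immediate and makes the narrow convex cone hypothesis visible (it keeps the linear segment inside the cone where $\rho\equiv 1$); your one-step route is geometrically cleaner but requires you to build and control the adapted chart.

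On that last point, one claim does not hold as written: $(g_{1,0},\dots,g_{2n,0},z_1,\dots,z_{2n})$ with all $z_k$ homogeneous of degree $0$ cannot be a chart at points of $\tilde\L_S$. There the radial Euler field $E$ satisfies $dg_{j,0}(E)=g_{j,0}=0$ and $dz_k(E)=0$, so all $4n$ differentials annihilate $E$ and are linearly dependent. The repair is immediate---take one coordinate homogeneous of degree $1$ (e.g.\ $|(\eta,\xi)|$), or pass to the cosphere bundle and extend by homogeneity---after which your Taylor expansion goes through and yields degree-$0$ coefficients $b_{0,j}$. The paper's linear-coordinate computation sidesteps this issue altogether.
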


\begin{proof}
We may assume $p$ is homogeneous of order 1 without loss of generality. 
We denote 
\[
(z,\z)= S^{-1}(y,\y)
\]
so that $p(y,\y,z,\z)=0$. We let $\C_2, \C_3$ be convex conic neighborhoods of 
$(x_0,\x_0)$ such that $\overline{\C_2}\subset \C_3$, 
 and let $\C_0, \C_1$ be convex conic neighborhoods of $(y_0,\y_0,x_0,\x_0)$  such that
\[
\supp p \subset \C_0\subset\overline{\C_0}\subset \C_1\subset (S\C_2)\times\C_2.
\]
We choose $\chi\in S^0_{cl}(\re^{2n})$ so that $\chi=1$ on $\C_0$ and 
$\supp \chi\subset \C_1$. We also choose $\rho\in S_{cl}^0(\re^n)$ so that 
$\rho=1$ on $\C_2$ and $\supp\rho\subset \C_3$. Then we compute
\begin{align*}
p(y,\y,x,\x) &= p(y,\y,x,\x)-p(y,\y,z,\z) \\
&= \int_0^1 \frac{d}{dt}\bigpare{p(y,\y,tx+(1-t)z,t\x+(1-t)\z)}dt\\
&=\sum_{j=1}^n (x_j-z_j)\int_0^1 \frac{\pa p}{\pa x_j}(y,\y,tx+(1-t)z,t\x+(1-t)\z)dt \\
&\quad + \sum_{j=1}^n (\x_j-\z_j)\int_0^1 \frac{\pa p}{\pa \x_j}(y,\y,tx+(1-t)z,t\x+(1-t)\z)dt.
\end{align*}
We now set
\begin{align*}
& g_j(y,\y,x,\x) = \chi(y,\y,x,\x)\int_0^1 \frac{\pa p}{\pa x_j}(y,\y,tx+(1-t)z,t\x+(1-t)\z)dt\\
&g_{n+j}(y,\y,x,\x)=\chi(y,\y,x,\x)\int_0^1 \frac{\pa p}{\pa \x_j}(y,\y,tx+(1-t)z,t\x+(1-t)\z)dt
\end{align*}
for $j=1,2,\dots,n$. We note $g_j\in S^1_{cl}(\re^{2n})$ for $j=1,2,\dots, n$, and 
$g_j\in S^0_{cl}(\re^{2n})$ for $j=n+1,\dots, 2n$. By the choice of $\chi$, we have 
\begin{equation}\label{eq0203a}
p(y,\y,x,\x) =\sum_{j=1}^n (x_j-z_j)g_j(y,\y,x,\x)
+\sum_{j=1}^n (\x_j-\z_j)g_{n+j}(y,\y,x,\x). 
\end{equation}
We also set
\[
f_j(x,\x)=x_j|\x|\rho(x,\x), \quad f_{n+j}(x,\x)= \x_j\rho(x,\x)
\]
for $j=1,2,\dots, n$. Then, as well as the computation above,  we have 
\begin{align*}
&(f_j\circ S^{-1})(y,\y)-f_j(x,\x)
= f_j(z,\z)-f(x,\x) \\
&\quad =-\int_0^1 \frac{d}{dt} f_j(tx+(1-t)z,t\x+(1-t)\z)dt\\
&\quad = -\sum_{k=1}^n (x_k-z_k)\int_0^1 \frac{\pa f_j}{\pa x_k}(tx+(1-t)z,t\x+(1-t)\z)dt \\
&\quad \qquad - \sum_{k=1}^n (\x_k-\z_k)\int_0^1 \frac{\pa f_j}{\pa \x_k}(tx+(1-t)z,t\x+(1-t)\z)dt.
\end{align*}
It is easy to see that on $(S\C_2)\times \C_2$, we have 
\begin{align*}
&\frac{\pa f_j}{\pa x_k}(tx+(1-t)z,t\x+(1-t)\z) =\d_{jk}|t\x+(1-t)\z|, \\
&\frac{\pa f_j}{\pa \x_k}(tx+(1-t)z,t\x+(1-t)\z) =r_{jk}(y,\y,x,\x),\\
&\frac{\pa f_{n+j}}{\pa x_k}(tx+(1-t)z,t\x+(1-t)\z) =0,\\
&\frac{\pa f_{n+j}}{\pa \x_k}(tx+(1-t)z,t\x+(1-t)\z) =\d_{jk},
\end{align*}
where $r_{jk}\in S_{cl}^0(\re^{2n})$, $j,k=1,2,\dots, n$. 
Thus we have 
\begin{align*}
&(f_j\circ S^{-1})(y,\y)-f_j(x,\x) =-(x_j-z_j)\int_0^1 |t\x+(1-t)\z|dt +r_j(y,\y,x,\x)\\
& (f_{n+j}\circ S^{-1})(y,\x)-f_{n+j}(x,\x)=-(\x_j-\z_j)
\end{align*}
on $(S\C_2)\times \C_2$ for $j=1,2,\dots, n$, where $r_j\in S^0_{cl}(\re^{2n})$. Since $g_j$ are supported in 
$\C_1\subset (S\C_2)\times \C_2$, we can find $b_j\in S^0_{cl}(\re^{2n})$ such that 
\begin{align*}
&(x_j-z_j)g_j(y,\y,x,\x) = b_j(y,\y,x,\x) ((f_j\circ S^{-1})(y,\y)-f_j(x,\x))+r_j', \\
&(\x_j-\z_j)g_{n+j}(y,\y,x,\x)=b_{n+j}(y,\y,x,\x)((f_{n+j}\circ S^{-1})(y,\y)-f_{n+j}(x,\x))
\end{align*}
with $r_j'\in S^0_{cl}(\re^{2n})$, $j=1,2,\dots, n$. 
The assertion now follows from these and \eqref{eq0203a}. 
\end{proof}

\begin{proof}[Proof of Theorem 2.1]
The ``only if '' part is straightforward: If $a_j\in S^1_{cl}(\re^n)$ satisfying \eqref{eq0201a}, 
then $p_j(y,\y,x,\x)=(a_j\circ S^{-1})(y,\y)-a_j(x,-\x)$ vanish on $\L_S$, and 
hence \eqref{eq0202a} follows from the definition of the FIOs and the $L^2$-boundedness 
theorem of FIOs (see, e.g., \cite{Ho2} Theorem~25.3.1 or \cite{So} Theorem~6.2.1). 

We suppose \eqref{eq0202a} and show the ``if'' part. At first, we note $\WF(u)\subset \L_S$: 
If $(y_0,x_0,\y_0,-\x_0)\notin \L_S$, then we can find 
$a\in S_{cl}^0(\re^n)$ and $b\in S^1_{cl}(\re^n)$ such that $a_0(y_0,\y_0)\neq 0$, 
$b_0(x_0,\x_0)\neq 0$ and that $a$ and $b$ are supported in  small conic 
neighborhoods of $(y_0,\y_0)$ and $(x_0,\x_0)$, respectively, so that 
$a(y,\y)\cdot (b\circ S^{-1})(y,\y)=0$. By \eqref{eq0202a} with Lemma~\ref{lem0203a}, we learn 
\begin{align*}
&a(y,D_y)((b\circ S^{-1})(y,D_y)-b(x,-D_x))u \\
&\qquad =(-a(y,D_y)b(x,-D_x)+R(y,D_y))u 
\in B^{-n/2,\infty}_{2}(\re^{2n})
\end{align*}
with $R\in S^0_{1,0}(\re^{n})$. 
This implies $a(y,D_y)b(x,-D_x)u\in B^{-n/2,\infty}_{2}(\re^{2n})$ by the boundedness 
of $R$ in $B^{-n/2,\infty}_2(\re^{2n})$ (see \cite{Ho2} Corollary~B.1.6). Iterating this procedure, we learn 
\[
[a(y,D_y)b(x,-D_x)]^N u\in B^{-n/2,\infty}_2(\re^{2n})
\]
for any $N$, and this implies $(y_0,\y_0,x_0,-\x_0)\notin \WF(u)$. 

We now let $p_1,p_2,\dots, p_N\in S^1_{cl}(\re^{2n})$ such that $p_j$ vanish
on $\tilde \L_S$, and we show 
\[
p_1(y,D_y,x,-D_x)p_2(y,D_y,x,-D_x)\cdots p_N(y,D_y,x,-D_x)u
\in B^{-n/2,\infty}_{2,loc}(\re^{2n}).
\]
By the above observation, we may assume $u$ is essentially supported in
an arbitrarily small conic neighborhood of $\L_S$. Moreover, by partition of unity, 
we may also assume $p_j$ are supported in a small convex conic neighborhood 
of $(y_0,\y_0,x_0,\x_0)\in \tilde \L_S$, where $(y_0,\y_0)=S(x_0,\x_0)$.
Then by Lemma\ref{lem0204a}, we have 
\begin{align*}
p_j(y,D_y,x,-D_x) &= \sum_{k=1}^{2n} b_{jk}(y,D_y,x,-D_x)((f_k\circ S^{-1})(y,D_y)
-f_k(x,-D_x)) \\
& \quad +r_j(y,D_y,x,-D_x)
\end{align*}
for each $j=1,\dots, N$, where $b_{jk}\in S^0_{cl}(\re^{2n})$ and 
$f_j\in S^1_{cl}(\re^{2n})$ are those given in Lemma\ref{lem0204a}, and 
$r_j\in S(1,dy^2+\frac{d\y^2}{\jap{\y}^2}+dx^2+\frac{d\x^2}{\jap{\x}^2})$, 
where $S(m,g)$ denotes the symbol class defined in \cite{Ho2}, Section~18.5 (Weyl calculus). 
Then by simple symbol calculus, we can show 
\begin{align}
&\prod_{j=1}^N p_j(y,D_y,x,-D_x)u 
= \sum_{k_1=1}^{2n}\cdots \sum_{k_N=1}^{2n}\prod_{j=1}^N b_{jk_j}(y,D_y,x,-D_x)\times \label{eq0204a}\\
&\quad \times \prod_{j=1}^N \bigpare{(f_{k_j}\circ S^{-1})(y,D_y)-f_{k_j}(x,-D_x)}u 
+R(y,D_y,x,-D_x)u \nonumber
\end{align}
with some $R\in S(1,dy^2+\frac{d\y^2}{\jap{\y}^2}+dx^2+\frac{d\x^2}{\jap{\x}^2})$. 
Note $R$ is bounded in $B^{-n/2,\infty}_2(\re^{2n})$. 
Each term in the right hand side has the form 
\[
B(y,D_y,x,-D_x) \, \mbox{ker}\bigbrac{Ad_S(f_{k_1})\cdots Ad_S(f_{k_N})U}
\]
with $B\in S_{cl}^0(\re^{2n})$ except for $R$, where $\mbox{ker}[A]$ denotes the distribution 
kernel of an operator $A$. Now the claim follows from the assumption and Lemma~\ref{lem0203a}. 
\end{proof}


\section{Proof of Theorem~\ref{thm0101a}}

Here we  prove that, under Assumption~\ref{ass0aa} with $\m=2$, $W(t)=e^{itH_0}e^{-itH}$ 
satisfies the condition of Corollary~\ref{cor0202a}-(ii) with $S=w_\pm$, where $t\in\re_\pm$. 
The condition is an Egorov-type theorem, and it was essentially proved in 
\cite{Na1} (see also \cite{Na2,IN1}) in the semiclassical formalism. 
We modify the argument to prove the Egorov theorem in $S^1_{1,0}(\re^n)$ 
symbol class. Namely, we prove the following:

\begin{thm}\label{thm0301a}
Suppose Assumption~\ref{ass0aa} with $\m=2$, and suppose the global nontrapping condition. 
Let $\pm t>0$. 
Then for any $a\in S^1_{1,0}(\re^n)$ satisfying \eqref{eq0201a}, there is $b\in S^0_{1,0}(\re^n)$ 
such that 
\[
W(t) a(x,D_x) W(t)^{-1} = (a\circ w_\pm^{-1})(x,D_x) +b(x,D_x).
\]
\end{thm}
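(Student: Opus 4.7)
The plan is to execute an Egorov-type argument in the classical symbol class $S^1_{1,0}$, building a full asymptotic expansion of the conjugated operator modulo smoothing, in the spirit of the previous works \cite{Na1, Na2, IN1} but adapted to the non-semiclassical setting. Without loss of generality, take $t > 0$. First, I would differentiate $A(t) := W(t) a(x, D_x) W(t)^{-1}$ to obtain
\[
\frac{d}{dt} A(t) = -i\bigl[\tilde V(t), A(t)\bigr], \qquad A(0) = a(x, D_x),
\]
where $\tilde V(t) := e^{itH_0}(H - H_0) e^{-itH_0}$ is a time-dependent pseudodifferential operator whose principal symbol is $\tilde v(t, x, \x) = (p - p_0)\bigl(\exp(tH_{p_0})(x, \x)\bigr)$.

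Next, I would construct a time-dependent symbol $\hat a(t, x, \x) \in S^1_{1,0}(\re^n)$ by an asymptotic expansion $\hat a \sim \sum_{j=0}^\infty \hat a_j$ with $\hat a_j \in S^{1-j}_{1,0}$. The principal part $\hat a_0$ solves the transport equation $\partial_t \hat a_0(t) = \{\tilde v(t), \hat a_0(t)\}$, $\hat a_0(0) = a$, which by the method of characteristics equals $a\bigl(w(t)^{-1}(x, \x)\bigr)$, where $w(t) = \exp(-tH_{p_0}) \circ \exp(tH_p)$ is the classical flow associated with $W(t)$. Each correction $\hat a_{j+1}$ is chosen recursively to cancel the next order of the commutator remainder, leaving
\[
\partial_t \hat a(t, x, D_x) + i\bigl[\tilde V(t), \hat a(t, x, D_x)\bigr] =: R(t)
\]
with $R(t)$ a smoothing operator depending smoothly on $t$. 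Applying Duhamel's formula to the propagator generated by $\tilde V(s)$ then gives $A(t) - \hat a(t, x, D_x) \in S^{-\infty}_{1,0}$.

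Finally, I would identify the principal symbol. By Lemma~\ref{lem0a01a} in the appendix, $w(t)(x, \x) = w_+(x, \x) + O(|\x|^{2 - \m})$ as $|\x| \to \infty$, together with the corresponding estimates on derivatives; for $\m = 2$ this means $w(t) - w_+$ behaves as an $S^0$-type perturbation of $w_+$. Combining with the compact $x$-support of $a$ from \eqref{eq0201a}, a Taylor expansion yields $a \circ w(t)^{-1} - a \circ w_+^{-1} \in S^0_{1,0}$. Putting everything together gives $W(t) a(x, D_x) W(t)^{-1} - (a \circ w_+^{-1})(x, D_x) \in S^0_{1,0}$, which is the claim. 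The case $t < 0$ is analogous with $w_-$.

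The main obstacle is the uniform symbolic construction of $\hat a(t, \cdot)$: each recursive step requires a genuine gain of one order in the commutator remainder, which requires controlling all derivatives of the characteristic flow $w(s)$ uniformly for $s \in [0, t]$. This is where the nontrapping hypothesis enters decisively. The boundary rate $\m = 2$ makes the potential $V$ merely bounded (not decaying), so the spatial growth of $\tilde v(s, x, \x)$ must be offset against the symbol calculus — the mechanism that works here is precisely the same as in the authors' semiclassical Egorov theorems of \cite{Na1, Na2, IN1}, adapted to the $S^m_{1,0}$ calculus.
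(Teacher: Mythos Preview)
Your plan is the paper's Egorov argument: build an asymptotic symbol by iterated transport equations, apply Duhamel to show the difference from $A(t)$ is smoothing, then compare the principal symbol with $a\circ w_\pm^{-1}$ via the appendix estimates. The one substantive difference is your choice of flow for the leading term. You transport along the \emph{full} classical flow $w(t)=\exp(-tH_{p_0})\circ\exp(tH_p)$, setting $\hat a_0=a\circ w(t)^{-1}$; the paper instead uses the \emph{kinetic} flow $w_0(t)=\exp(-tH_{p_0})\circ\exp(tH_k)$, setting $\psi_0=a\circ w_0(t)^{-1}$ (see the paragraph preceding Lemma~\ref{lem0302a}). The point is that when $\mu=2$ the potential contribution $\tilde V(t,x,\xi)=V(x-t\xi)$ is only of order $S^0_{1,0}$ on the relevant support, so it can be pushed into the first remainder $r_0$ rather than into the transport; this lets the paper work entirely with the homogeneous flow $w_0(t)$ and invoke only Lemma~\ref{lem0a01a}. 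Your route additionally needs uniform symbol estimates for $w(t)$, i.e.\ Lemma~\ref{lem0a02a}, which is stated for $1<\mu<2$ but whose proof carries over to $\mu=2$ without change; in fact your variant is precisely what the paper adopts in Section~4 for the range $1<\mu<2$. Two small corrections: the transport equation should carry a minus sign, $\partial_t\hat a_0=-\{\tilde v,\hat a_0\}$, for $\hat a_0$ to equal $a\circ w(t)^{-1}$; and Lemma~\ref{lem0a01a} compares $w_0(t)$ (not $w(t)$) with $w_\pm$, so your final comparison implicitly combines Lemmas~\ref{lem0a01a} and~\ref{lem0a02a}.
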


We first sketch the outline following \cite{Na1}. We set
\[
A(t)= W(t) a(x,D_x)W(t)^{-1}, \quad t\in\re.
\]
If we consider $W(t)$ as an evolution operator, we can compute the generator as follows: 
For $\g\in \mathcal{S}(\re^n)$, we have 
\begin{align*}
\frac{d}{dt} W(t)\g &= e^{itH_0} (iH_0-iH) e^{-itH_0}W(t)\g \\
&= -i \bigpare{ e^{itH_0} H e^{-itH_0} -H_0} W(t)\g =-iL(t) W(t)\g. 
\end{align*}
Since 
\[
e^{itH_0} D_x e^{-itH_0} =D_x, \quad e^{itH_0} x e^{-itH_0} = x-tD_x,
\]
we learn that the principal symbol of $L(t)$ is given by 
\[
\ell(t,x,\x) = \frac12 \sum_{j,k=1}^n (a_{jk}(x-t\x)-\d_{jk})\x_j\x_k +V(x-t\x).
\]
In fact, if we use the Weyl calculus, which we do, the symbol of $L(t)$ is given by 
$\ell(t,x,\x)$ modulo $S^0_{1,0}$-terms. The classical flow generated by $\ell(t,x,\x)$ is 
\[
w(t)=\exp(tH_{p_0})\circ \exp(tH_p).
\]
Here we use, however, the flow:
\[
w_0(t) =\exp(tH_{p_0})\circ \exp(tH_k)
\]
which is generated by 
\[
\ell_0(t,x,\x) = \frac12 \sum_{j,k=1}^n (a_{jk}(x-t\x)-\d_{jk})\x_j\x_k. 
\]
Analogously to the usual Egorov theorem, we expect the principal symbol of $A(t)$ is 
given by $(a_0\circ w_0(t)^{-1})(x,\x)$,where $a_0$ is the principal symbol of $a$. 
We construct an asymptotic expansion of $A(t)$ by solving transport equations iteratively. 
We set
\[
\g_0(t,x,\x)= (a\circ w_0(t)^{-1})(x,\x)
\]
for $a\in S^1_{1,0}(\re^n)$. We note that by Lemma~\ref{lem0a01a}, 
$\g_0(t,\cdot,\cdot)\in S^1_{1,0}(\re^n)$, uniformly in $t$. For a symbol 
$q(x,\x)\in S^m_{1,0}(\re^n)$, we define a family of seminorms by 
\[
|q|_{m,L,K} =\sum_{|\a|+|\b|\leq L}\sup_{x\in K, \x\in\re^n} 
\bigabs{\jap{\x}^{-m+|\b|}\pa_x^\a\pa_\x^\b q(x,\x)}
\]
for $L\in\mathbb{N}$, $K\Subset \re^n$. For $T>0$, we write $I_T=[-T,T]$. 

\begin{lem}\label{lem0302a}
Let $a\in S^1_{1,0}(\re^n)$ satisfying \eqref{eq0201a}. Then there exists $\g(t,x,\x)$ such that 
\begin{enumerate}
\renewcommand{\theenumi}{\roman{enumi}}
\renewcommand{\labelenumi}{{{\rm (\theenumi)} }}
\item $\g(0,x,\x)=a(x,\x)$.
\item $\g(t,\cdot,\cdot)\in S^1_{1,0}(\re^n)$ and for any $L,T>0$ and  $K\Subset \re^n$, 
\[
|\g(t,\cdot,\cdot)|_{1,L,K}\leq C_{L,T,K}, \quad t\in I_T. 
\]
\item $\g(t,\cdot,\cdot)-\g_0(t,\cdot,\cdot)\in S^0_{1,0}(\re^n)$, and for any 
$L,T>0$ and  $K\Subset \re^n$, 
\[
|\g(t,\cdot,\cdot)-\g_0(t,\cdot,\cdot)|_{0,L,K}\leq C_{L,TK}, \quad t\in I_T. 
\]
\item Let $G(t)=\g^W\!(t,x,D_x)$, and set
\[
R(t)=\frac{d}{dt} G(t)-i[L(t),G(t)].
\]
Then $R(t)$ is a smoothing operator, and 
$\norm{\jap{D_x}^N R(t)\jap{D_x}^N}_{\mathcal{L}(L^2)}\leq C_{T,N}$, 
 for any $N$, $t\in I_T$. 
\end{enumerate}
\end{lem}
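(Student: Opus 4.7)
The plan is to construct $\gamma$ as an asymptotic expansion
$$\gamma(t,x,\x) \sim \sum_{k \geq 0} \gamma_k(t,x,\x), \qquad \gamma_k(t,\cdot) \in S^{1-k}_{1,0}(\re^n),$$
uniformly in $t \in I_T$, by iteratively solving transport equations along the time-dependent Hamiltonian flow $w_0(t)$. The zeroth term $\gamma_0(t,x,\x) := (a \circ w_0(t)^{-1})(x,\x)$ automatically satisfies (i) since $w_0(0) = \mathrm{id}$, and Lemma~\ref{lem0a01a} from the Appendix supplies the uniform $S^1_{1,0}$ bounds on $I_T$ needed for (ii). Property (iii) will follow once the higher $\gamma_k$ with $k \geq 1$ are constructed in $S^0_{1,0}$ and below.

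By the chain rule applied to the definition of $w_0(t)$ as a composition of Hamiltonian flows, $\gamma_0$ satisfies a transport equation of the form $\pa_t \gamma_0 - \{\ell_0, \gamma_0\} = 0$ (with the sign matching the convention in $R(t) = \pa_t G - i[L,G]$). By the Weyl calculus, the Weyl symbol of $\pa_t G_0(t) - i[L(t), G_0(t)]$ equals $\pa_t \gamma_0 - \{\ell, \gamma_0\}$ modulo Moyal subprincipal terms of order one below; substituting $\ell = \ell_0 + V(x-t\x)$ and using the transport equation, this reduces to $-\{V(x-t\x), \gamma_0\}$ plus those Moyal subprincipal terms. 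Under Assumption~\ref{ass0aa} with $\m = 2$, the potential satisfies $|\pa_x^\a V(x)| \leq C_\a \jap{x}^{-|\a|}$, so the bracket $\{V(x-t\x), \gamma_0\}$ lies in $S^0_{1,0}$ uniformly in $t \in I_T$. Hence the residue $r_0$ belongs to $S^0_{1,0}$ uniformly, one order below $\gamma_0$.

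For the inductive step, given $\gamma_0,\ldots,\gamma_{k-1}$ with aggregate residue $r_{k-1} \in S^{-k+1}_{1,0}$, I define $\gamma_k \in S^{1-k}_{1,0}$ as the Duhamel-type solution of
$$\pa_t \gamma_k - \{\ell_0, \gamma_k\} = -r_{k-1}, \qquad \gamma_k(0,\cdot) = 0,$$
obtained by integration along the characteristics of $\ell_0$, with $t$-uniform $S^{1-k}_{1,0}$ bounds furnished again by Lemma~\ref{lem0a01a}. Another round of Weyl calculus shows the new residue drops by one further $\jap{\x}$-order. A standard Borel-type summation with $k$-dependent diagonal truncation, using the seminorms $|\gamma_k|_{1-k,L,K}$ for the truncation rate, produces $\gamma \sim \sum_k \gamma_k$ whose associated residue $R(t)$ has Weyl symbol in $\bigcap_m S^{-m}_{1,0}$ uniformly on $I_T$; this is precisely the weighted smoothing bound in (iv).

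The principal obstacle is Lemma~\ref{lem0a01a} itself, which guarantees that pullback by $w_0(t)^{\pm 1}$ preserves the symbol classes with $t$-uniform seminorms. The global nontrapping hypothesis is what allows $w_0(t)$ to be defined on all of $T^*\re^n\setminus 0$, and the $\m = 2$ decay of $(a_{jk}-\d_{jk})$ and $V$ is precisely what makes each step of the asymptotic series drop one $\jap{\x}$-order—with weaker decay the Moyal order-gain would fail and one would be forced into the modified framework of Theorem~\ref{thm0102a}. A secondary technical point is that the compact $x$-support in \eqref{eq0201a} is not preserved by $w_0(t)^{-1}$, but the homogeneity in $\x$ together with the coefficient decay built into Lemma~\ref{lem0a01a} still yields the required seminorm control on each $K \Subset \re^n$ appearing in (ii)–(iii).
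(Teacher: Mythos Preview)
Your construction---set $\gamma_0 = a \circ w_0(t)^{-1}$, iteratively solve transport equations along the $\ell_0$-flow to produce $\gamma_k \in S^{1-k}_{1,0}$, then Borel-sum---is exactly the paper's approach, and your explicit isolation of the potential contribution $\{\tilde V,\gamma_0\}$ is in fact more detailed than the paper's terse ``by virtue of the Weyl calculus''. One small correction to your closing remark: the compact $x$-support \emph{is} preserved (uniformly in $t$) under pullback by $w_0(t)^{-1}$, since Lemma~\ref{lem0a01a} bounds $|z(t,x,\xi)|$ uniformly for $x$ in a compact set---the paper notes this at the start of its proof---so your final hedge is unnecessary, though harmless.
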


\begin{proof}
We can find $K\Subset \re^n$ such that $\g_0(t,x,\x)=0$ if $x\notin K$, since 
$w_0(t)(x,\x)$ has limit as $t\to\pm\infty$. We note 
$\ell_0(t,\cdot,\cdot)\in S^2_{1,0}(\re^n)$ and for any $L$, $|\ell_0(t,\cdot,\cdot)|_{2,L,K}$ 
is uniformly bounded. By the construction, $\g_0$ satisfies
\[
\frac{\pa}{\pa t}\g_0(t,x,\x)=-\{\ell_0,\g_0\}(t,x,\x),
\]
where $\{\cdot,\cdot\}$ denotes the Poisson bracket. Then by virtue of the Weyl calculus, 
we learn 
\[
\frac{\pa}{\pa t} \g_0^W\!(t,x,D_x)+i[L(t),\g_0^W(t,x,D_x)]=r_0^W\!(t,x,D_x)
\]
with $r_0\in S^0_{1,0}(\re^n)$, and the seminorms of $r_0$ 
are locally uniformly bounded in $t$. Then we solve the transport equation:
\begin{equation}\label{eq0301a}
\frac{\pa}{\pa t} \g_1(t,x,\x)+\{\ell_0,\g_1\}(t,x,\x)=-r_0(t,x,\x)
\end{equation}
with $\g_0(0,x,\x)=0$. It is easy to see that $\g_1\in S^0_{1,0}(\re^n)$ and 
the seminorms are locally uniformly bounded in $t$. Iterating this procedure, 
we obtain $\g_j\in S^{1-j}_{1,0}(\re^n)$, $j=1,2,\dots$, and we set the 
asymptotic sum as $\g$: 
\[
\g\sim \sum_{j=0}^\infty \g_j \quad \text{in }S^1_{1,0}(\re^n).
\]
Now it follows from the above construction that 
\begin{equation}\label{eq0302a}
\frac{\pa}{\pa t}\g^W\!(t,x,D_x) +i[L(t),\g^W\!(t,x,D_x)]=r(t,x,D_x)
\end{equation}
with $r\in S^{-\infty}_{1,0}(\re^n)$. Thus our $\g$ satisfies the required properties. 
\end{proof}

\begin{proof}[Proof of Theorem 3.1]
Let $\g(t,x,\x)$ be as in Lemma~\ref{lem0302a} and let $G(t)=\g^W\!(t,x,D_x)$. 
By the lemma, we have 
\[
\frac{d}{dt}(W(t)^{-1}G(t) W(t)) = W(t)^{-1}R(t) W(t)
\]
and the RHS is a smoothing operator, and its seminorms are uniformly bounded. 
Since $W(0)^{-1}G(0)W(0)=a^W\!(x,D_x)$, we learn 
\begin{equation}\label{eq0303a}
W(t)a^W\!(x,D_x)W(t)^{-1} -G(t) =R_2(t)
\end{equation}
is a smoothing operator. Thus, the principal symbol of $A(t)$ is 
$a\circ w_0(t)^{-1}$. It remains to compare $a\circ w_0(t)^{-1}$ with $a\circ w_\pm^{-1}$. 

We denote 
\[
(\tilde x(t,z,\y),\tilde\x(t,z,\y))= w_0(t)^{-1}(z,\y), \quad 
(\tilde x_\pm(z,\y),\tilde \x_\pm(z,\y))=w_\pm^{-1}(z,\y).
\]
Then by Lemma~\ref{lem0a01a}, we learn 
\begin{align*}
&\bigabs{\pa_z^\a\pa_\y^\b (\tilde x(t,z,\y)-\tilde x_\pm(z,\y))}
\leq C_{\a\b}\jap{\y}^{-|\b|}\jap{t\y}^{-\m+1}, \\
&\bigabs{\pa_z^\a\pa_\y^\b (\tilde \x(t,z,\y)-\tilde \x_\pm(z,\y))}
\leq C_{\a\b}\jap{\y}^{1-|\b|}\jap{t\y}^{-\m+1}
\end{align*}
for $\pm t>0$. We then compute 
\begin{align*}
&(a\circ w_0(t)^{-1})(z,\y) - (a\circ w_\pm^{-1})(z,\y) \\
&\quad = a(\tilde x(t,z,\y),\tilde\x(t,z,\y))-a(\tilde x_\pm(z,\y),\tilde\x_\pm(z,\y))\\
&\quad =\int_0^1 \frac{\pa}{\pa s}a(s\tilde x(t,z,\y)+(1-s)\tilde x_\pm(z,\y),
s\tilde \x(t,z,\y)+(1-s)\tilde \x_\pm(z,\y))ds\\
&\quad =(\tilde x(t,z,\y)-\tilde x_\pm(z,\y))\int_0^1(\pa_x a)(s\tilde x+(1-s)\tilde x_\pm,
s\tilde \x+(1-s)\tilde \x_\pm)ds\\
&\qquad +(\tilde \x(t,z,\y)-\tilde \x_\pm(z,\y))\int_0^1(\pa_\x a)(s\tilde x+(1-s)\tilde x_\pm,
s\tilde \x+(1-s)\tilde \x_\pm)ds\\
&\quad =(\tilde x(t)-\tilde x_\pm)\cdot A(z,\y)+(\tilde\x(t)-\tilde\x_\pm)\cdot B(z,\y),
\end{align*}
and it is easy to see
\[
\bigabs{\pa_z^\a\pa_\y^\b A(z,\y)}\leq C_{\a\b}\jap{\y}^{1-|\b|}, 
\quad \bigabs{\pa_z^\a\pa_\y^\b B(z,\y)}\leq C_{\a\b}\jap{\y}^{-|\b|}.
\]
Combining these, we now have 
\[
\bigabs{\pa_z^\a\pa_\y^\b(a\circ w_0(t)^{-1}-a\circ w_\pm^{-1})} 
\leq C_{\a\b}\jap{\y}^{1-|\b|}\jap{t\y}^{-\m+1}
\]
for $\pm t>0$. For fixed $t\neq 0$, this implies 
$a\circ w_0(t)^{-1}-a\circ w_\pm^{-1}\in S^{2-\m}_{1,0}=S^0_{1,0}$ since 
$\m=2$. Combining this with $\g(t,\cdot,\cdot)-a\circ w_0(t)^{-1}\in S^0_{1,0}$, we 
learn $\g(t,\cdot,\cdot)-a\circ w_\pm\in S^0_{1,0}$. 
The assertion follows from this and \eqref{eq0303a}.  
\end{proof}

Theorem~\ref{thm0101a} now follows immediately from Theorem~\ref{thm0301a} and 
Corollary~\ref{cor0202a}-(ii). 
\qed


\section{Fourier integral operators associated to asymptotically homogeneous canonical 
transform}

Here we discuss FIOs associated to asymptotically homogeneous canonical transform, 
e.g., $w(t)$ and $w_0(t)$, and we prove Theorem~\ref{thm0102a}. We start with several definitions.

\begin{defn}\label{defn0401a}
Let $\L\subset T^*\re^m\setminus 0$ be a $d$-dimensional conic submanifold, and let 
$(x_0,\x_0)\in \L$. Suppose $\O$ be a conic neighborhood of $(x_0,\x_0)$ and let 
$\F:\O\to \re^{2m}$ be a local coordinate system on $\O$. $\F$ is called an 
{\it admissible conic local coordinate system}\/ (associated to $\L$) if 
$\F$ satisfies the following conditions:
\begin{enumerate}
\renewcommand{\theenumi}{\roman{enumi}}
\renewcommand{\labelenumi}{{{\rm (\theenumi)} }}
\item $\F$ is expressed as 
\[
\F(x,\x)=(|\x|, \s(x,\hat \x),\t(x,\hat\x)), \quad \s(x,\hat\x)\in\re^{d-1}, \t(x,\hat\x)\in\re^{2m-d}
\]
for $(x,\x)\in T^*\re^{2m}$, where $\hat \x=\x/|\x|$, i.e., $\s(x,\x)$ and $\t(x,\x)$ are independent of $|\x|$.
\item $\L\cap\O =\bigset{(x,\x)\in\O}{\t(x,\x)=0}$. 
\end{enumerate}
\end{defn}

\begin{defn}\label{defn0402a}
Let $\L\subset T^*\re^m$ be a $d$-dimensional submanifold. $\L$ is called {\it asymptotically 
conic}\/ if $\L$ satisfies the following conditions: 
\begin{enumerate}
\renewcommand{\theenumi}{\roman{enumi}}
\renewcommand{\labelenumi}{{{\rm (\theenumi)} }}
\item There exists a $d$-dimensional conic submanifold $\L_c\subset T^*\re^m$ such that for any 
$K\Subset \re^m$ and $\O\subset T^*\re^m$ : a conic neighborhood of $\L_c\cap (K\times\re^m)$, 
there is $R>0$ such that 
\[
\L\cap \bigset{(x,\x)}{x\in K, |\x|\geq R}\subset \O.
\]
\item Let $\O$ be a conic neighborhood of $(x_0,\x_0)\in\L_c$, and let 
$\F$ be an admissible conic local coordinate system on $\O$. 
Then there are $R>0$,  an $\re^{2m-d}$-valued function $\f(\l,\s)$ such that 
\begin{multline*}
\L\cap \bigset{(x,\x)\in\O}{|\x|\geq R} \\
=\bigset{(x,\x)\in\O}{\t(x,\x)=\f(|\x|,\s(x,\x)), |\x|\geq R},
\end{multline*}
and $\e>0$ such that for any $k\in\ze_+$ and $\a\in\ze_+^{d-1}$, 
\[
\bigabs{\pa_\l^k\pa_\s^\a \f(\l,\s)}\leq C_{k\a}\jap{\l}^{-\e-k}, \quad \l\geq R.
\]
\end{enumerate}
\end{defn}

\begin{defn}[Lagrangian distribution]\label{defn0403a}
Let $\L\subset T^*\re^m$ be an asymptotically conic Lagrangian submanifold, and let 
$u\in \mathcal{S}'(\re^m)$. Let $\n\in\re$. $u$ is called a Lagrangian distribution associated 
to $\L$ of order $\n$, if for any $p_1,\dots,p_N\in S^1_{1,0}(\re^m)$ such that $p_j=0$ on 
$\L$ ($j=1,2,\dots, N$), 
\[
p_1(x,D_x)p_2(x,D_x)\cdots p_N(x,D_x)u\in B^{-\n-m/4,\infty}_{2,loc}(\re^m). 
\]
We then write $u\in I^\n(\L,\re^m)$. 
\end{defn}

\begin{defn}\label{defn0404a}
Let $S: T^*\re^n \to T^*\re^n$ be a diffeomorphism. $S$ is called {\it asymptotically 
homogeneous} (of order one) if $S$ satisfies the following conditions: 
We write $S(x,\x)=(y(x,\x),\y(x,\x))$. There exists $S_c: T^*\re^n\to T^*\re^n$, a homogeneous 
canonical map (of order one) such that 
\[
y(x,\x)-y_c(x,\x)\in (S^{-\e}_{1,0}(\re^n))^n, \quad 
\y(x,\x)-\y_c(x,\x)\in (S^{1-\e}_{1,0}(\re^n))^n, 
\]
with some $\e>0$, where we denote $S_c(x,\x)=(y_c(x,\x),\y_c(x,\x))$.
\end{defn}

\begin{rem}\label{rem0401a}
By Lemmas~A.1 and A.2, we learn $w_0(t)$ and $w(t)$ are asymptotically homogeneous 
for $t\neq 0$, and they are associated to homogeneous canonical transforms $w_\pm$ 
for $t\in\re_\pm$, respectively. 
\end{rem}

\begin{lem}\label{lem0401a}
Suppose $S: T^*\re^n\to T^*\re^n$ is an asymptotically homogeneous canonical transform. 
Then 
\[
\L_S=\bigset{(y,x,\y,-\x)\in T^*\re^{2n}}{(y,\y)=S(x,\x)}
\]
is an asymptotically conic Lagrangian submanifold of $T^*\re^{2n}$. 
\end{lem}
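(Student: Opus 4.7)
The plan is to take the reference conic submanifold to be $\L_c := \L_{S_c}$, where $S_c$ is the homogeneous canonical transform that is asymptotic to $S$ per Definition~\ref{defn0404a}. Since $S_c$ is canonical and homogeneous of degree one in the fiber, its twisted graph $\L_{S_c}$ is a conic Lagrangian submanifold of $T^*\re^{2n}$ (equipped with $d\y\wedge dy-d\x\wedge dx$); similarly $\L_S$ is Lagrangian, by the standard fact that a twisted graph is Lagrangian iff the map is canonical, which one checks by pulling back the canonical form through the parametrization $(x,\x)\mapsto(y(x,\x),x,\y(x,\x),-\x)$. So only the asymptotically-conic conditions (i) and (ii) of Definition~\ref{defn0402a} require work.

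For condition (i) I would fix $K\Subset\re^{2n}$ and a conic neighborhood $\O$ of $\L_c\cap(K\times\re^{2n})$. Given $(y,x,\y,-\x)\in\L_S$ with $(y,x)\in K$, its companion $(y_c(x,\x),x,\y_c(x,\x),-\x)\in\L_c$ shares the $x,\x$ components, and the symbol estimates $y-y_c\in S^{-\e}_{1,0}$, $\y-\y_c\in S^{1-\e}_{1,0}$ together with $|\y_c(x,\x)|\lesssim|\x|$ force the base coordinates to differ by $O(|\x|^{-\e})$ and the direction of the fiber (whose norm is $\sim|\x|$) to differ by $O(|\x|^{-\e})$. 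Hence once $|(\y,-\x)|$ exceeds some $R$ the companion lies in $\O$, so does the original point, and (i) follows.

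For condition (ii), fix a reference point $(y_0,x_0,\y_0,-\x_0)\in\L_c$ with $\x_0\neq 0$; homogeneity plus $S_c$ being a diffeomorphism forces $\y_0\neq 0$. On a conic neighborhood, introduce the angular split $(\y,-\x)/|(\y,-\x)|=(\hat\eta_*,\hat\zeta_*)\in S^{2n-1}$ and parametrize the sphere near $\hat\x'_0$ by $(\rho,\omega_1,\omega_2)$ with $\rho=|\hat\zeta_*|$, $\omega_1=\hat\zeta_*/|\hat\zeta_*|$, $\omega_2=\hat\eta_*/|\hat\eta_*|$. Then take $\l=|(\y,-\x)|$, $\s=(x,\omega_1)\in\re^{2n-1}$, and $\t=(\t_1,\t_2,\t_3)\in\re^n\times\re^{n-1}\times\re$ with
\[
\t_1=y-y_c(x,-\omega_1),\quad \t_2=\omega_2-\y_c(x,-\omega_1)/|\y_c(x,-\omega_1)|,\quad \t_3=\rho-(1+|\y_c(x,-\omega_1)|^2)^{-1/2}.
\]
A direct computation using the degree-$0$ homogeneity of $y_c$ and degree-$1$ homogeneity of $\y_c$ shows that $\L_c$ coincides with $\{\t=0\}$, so $\F=(\l;\s;\t)$ is an admissible conic coordinate system in the sense of Definition~\ref{defn0401a}. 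I would then restrict to $\L_S$, parametrize it by $(x,\x)$, and express each $\t_j$ as a function of $(\l,\s)$ by inverting $\l^2=|\y(x,\x)|^2+|\x|^2$ for $|\x|$ via the implicit function theorem (which is legal for large $\l$ since the derivative in $|\x|$ stays bounded below in a narrow conic neighborhood where $|\y|\lesssim|\x|$).

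The main technical step is to transfer the $S^{-\e}_{1,0}$ and $S^{1-\e}_{1,0}$ estimates on $y-y_c$ and $\y-\y_c$ through that implicit change of variables to obtain the bound $|\pa_\l^k\pa_\s^\a\f(\l,\s)|\leq C_{k\a}\l^{-\e-k}$ required by Definition~\ref{defn0402a}(ii). The essential points are that $|\x|\sim\l$ uniformly in the chart, so $\jap{\x}^{-\e}$ converts to $\l^{-\e}$; each $\pa_\l$ produces an extra factor of $|\x|^{-1}\sim\l^{-1}$ by the chain rule; and the $\pa_\s$ derivatives act in the bounded directions $(x,\omega_1)$, so they preserve the $\l^{-\e}$ decay. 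I expect the bookkeeping of these derivative losses and the implicit inversion to be the most delicate step, but it requires no analytic input beyond the symbol estimates built into Definition~\ref{defn0404a}.
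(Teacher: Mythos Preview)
Your approach is correct and reaches the same conclusion as the paper, but the paper chooses a simpler admissible coordinate system that avoids the implicit-function-theorem step you flag as delicate. Rather than take the radial variable to be the full fiber norm $\l=|(\y,-\x)|$, the paper parametrizes a conic neighborhood of a point of $\L_{S_c}$ by
\[
(\l,\a,\b,\t,\t')\longmapsto \bigl(y_c(x_0+\a,\s^{-1}(\b))+\t,\ x_0+\a,\ \l(\y_c(x_0+\a,\s^{-1}(\b))+\t'),\ -\l\s^{-1}(\b)\bigr),
\]
where $\s$ is a local chart for $S^{n-1}$ near $\hat\x_0$. In these coordinates $\l=|\x|$, the conic submanifold $\L_{S_c}$ is exactly $\{\t=\t'=0\}$, and on $\L_S$ one reads off directly
\[
\t = y(x,\x)-y_c(x,\x),\qquad \t' = \l^{-1}\bigl(\y(x,\x)-\y_c(x,\x)\bigr),
\]
so the graph function $\f$ is literally a composition with the $S^{-\e}_{1,0}$ and $S^{1-\e}_{1,0}$ differences from Definition~\ref{defn0404a}, and the bound $|\pa_\l^k\pa_\s^\a\f|\le C\l^{-\e-k}$ follows immediately from the symbol estimates with no inversion.

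What your route buys is a coordinate system whose radial variable is literally $|(\y,-\x)|$, matching Definition~\ref{defn0401a} verbatim; the paper's $\l=|\x|$ differs from $|(\y,-\x)|$ by a smooth degree-zero factor, so one must implicitly note that the decay condition in Definition~\ref{defn0402a}(ii) is invariant under such rescalings of the radial coordinate. What the paper's route buys is that the graph $\L_S$ is most naturally parametrized by $(x,\x)$, so taking the radial variable to be $|\x|$ eliminates both the implicit inversion of $\l^2=|\y(x,\x)|^2+|\x|^2$ and your nonlinear transversal coordinates $\t_2,\t_3$, replacing them by linear shifts in $y$ and $\y/\l$. You also argue condition~(i) of Definition~\ref{defn0402a} explicitly, which the paper leaves implicit.
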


\begin{proof}
Let $S_c$ be the associated homogeneous canonical transform, and let $\L_{S_c}$ 
be the corresponding Lagrangian manifold. Let $(x_0,\x_0)\in T^*\re^n$ and let 
$(y_0,x_0,\y_0,-\x_0)\in \L_{S_c}$ with $(y_0,\y_0)=S_c(x_0,x_0)$. 
Let $\O_1\times \O_2\subset \re^n\times S^{n-1}$ be a small neighborhood of 
$(x_0,\hat\x_0)\in\L_{S_c}$ and let 
\[
\g:\O_1\times\O_2\to\re^n\times\re^{n-1}, \quad (x,\o)\mapsto (x-x_0, \s(\o))
\]
be a local coordinate system on $\O_1\times\O_2$. We then set
\[
\G:\ \re_+\times \g(\O_1\times\O_2)\times B_\e(0)\times B_\e(0) \to T^*\re^{2n}
\]
with 
\begin{multline*}
\G:\ (\l,\a,\b,\t,\t')\mapsto\\ 
 (y_c(\g^{-1}(\a,\b))+\t, 
x_0+\a,\l(\y_c(\g^{-1}(\a,\b))+\t'),-\l\s^{-1}(\b)),
\end{multline*}
where $B_\e(0)=\{x\in\re^n\,|\, |x|<\e\}$ with sufficiently small $\e>0$. 
Then $\Ran\G$ is a conic neighborhood of $(y_0,x_0,\y_0,-\x_0)$ 
in $T^*\re^{2n}$, and $\G^{-1}$ is an admissible conic local coordinate system. 
We note if $R$ is sufficiently large, 
\begin{align*}
&\L_S\cap (\Ran \G(\{\l>R\})) \\
&\quad =\bigset{\bigpare{y(x_0+\a,\l\s^{-1}(\b)),x_0+\a,\y(x_0+\a,\l\s^{-1}(\b)),-\l\s^{-1}(\b)}}
{\\
&\hspace{7cm} \l>R,(\a,\b)\in\g(\O_1\times\O_2)}
\end{align*}
and hence $(y,x,\y,-\x)\in \L_S\cap (\Ran\G(\{\l>R\}))$ if and only if 
\begin{align*}
&\t=y(x_0+\a,\l\s^{-1}(\b))-y_c(\g^{-1}(\a,\b)), \\
&\t'=\l^{-1}(\y(x_0+\a,\l\s^{-1}(\b))-\y_c(x_0+\a,\l\s^{-1}(\b))),
\end{align*}
where $(\l,\a,\b,\t,\t')=\G^{-1}(y,x,\y,-\x)$. Now it is easy to check $\L_S$ satisfies conditions
of Definition~\ref{defn0402a} if we set $\s\to (\a,\b)$, $\t\to(\t,\t')$, 
\[
\f_j(\l,\a,\b)= y_j(x_0+\a,\l \s^{-1}(\b))-y_{c,j}(x_0+\a,\l\s^{-1}(\b)), 
\]
and 
\[
\f_{n+j}(\l,\a,\b)= \l^{-1}\bigpare{\y_j(x_0+\a,\l\s^{-1}(\b))-\y_{c,j}(x_0+\a,\l\s^{-1}(\b))}
\]
for $j=1,2,\dots,n$. 
\end{proof}

\begin{defn}\label{defn0405a}
Let $S$ be an asymptotically homogeneous canonical transform from $T^*\re^n$ to $T^*\re^n$, 
and let $\L_S$ be the associated Lagrangian manifold in $T^*\re^{2n}$, which is asymptotically 
conic by Lemma\ref{lem0401a}. Let $U\in \mathcal{L}(\mathcal{S}(\re^n),\mathcal{S}'(\re^n))$ and let
$u\in \mathcal{S}'(\re^{2n})$ be its distribution kernel. Then $U$ is called a Fourier integral 
operator associated to $S$ of order $\s\in\re$ if $u\in I^\s(\L_S,\re^{2n})$.
\end{defn}

Given Definition~\ref{defn0405a}, we now have the exact meaning of Theorem~\ref{thm0102a}, and we prove 
Theorem~\ref{thm0102a} in the remaining of this section. We note Theorem~\ref{thm0201a} holds with little 
modification for FIOs associated to asymptotically homogeneous canonical transforms: 

\begin{thm}\label{thm0402a}
Let $S: T^*\re^n\to T^*\re^n$ be an asymptotically homogeneous canonical transform, 
and let $U\in \mathcal{L}(\mathcal{S}(\re^n),\mathcal{S}'(\re^n))$. $U$ is an FIO associated 
to $S$ if and only if for any $a_1,a_2,\dots, a_N\in S^1_{1,0}(\re^n)$ satisfying \eqref{eq0201a}, 
\[
Ad_S(a_1)Ad_S(a_2)\cdots Ad_S(a_N)U \in \mathcal{L}(L^2_{cpt}(\re^n),L^2_{loc}(\re^n)).
\]
\end{thm}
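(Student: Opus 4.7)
The plan is to mimic the proof of Theorem~\ref{thm0201a}, with the symbol class $S^1_{cl}$ everywhere replaced by $S^1_{1,0}$ and the conic Lagrangian $\L_S$ replaced by its asymptotically conic analogue. For the ``only if'' direction, suppose $U$ has kernel $u\in I^0(\L_S,\re^{2n})$. Given $a\in S^1_{1,0}(\re^n)$ satisfying \eqref{eq0201a}, the asymptotic homogeneity of $S^{-1}$ (Definition~\ref{defn0404a}) ensures $a\circ S^{-1}\in S^1_{1,0}(\re^n)$, so
\[
p(y,\y,x,\x)=(a\circ S^{-1})(y,\y)-a(x,-\x)
\]
lies in $S^1_{1,0}(\re^{2n})$ and vanishes on $\tilde\L_S$. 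The kernel of $Ad_S(a)U$ is $p(y,D_y,x,-D_x)u$, and iterating $N$ times produces a kernel in $B^{-n/2,\infty}_{2,loc}(\re^{2n})$ by Definition~\ref{defn0403a}. The passage to $L^2_{cpt}$--$L^2_{loc}$ boundedness uses a standard $L^2$-boundedness theorem for FIOs of order zero in the asymptotic class, which extends the classical boundedness theorem through the oscillatory integral representation close to the homogeneous limit $S_c$.

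The core of the ``if'' direction is an asymptotic analogue of Lemma~\ref{lem0204a}: if $p\in S^1_{1,0}(\re^{2n})$ vanishes on $\tilde\L_S$ and is supported in a narrow convex conic neighborhood of a point of $\tilde\L_S$, then there exist $b_j\in S^0_{1,0}(\re^{2n})$, $f_j\in S^1_{1,0}(\re^n)$, and $r\in S^0_{1,0}(\re^{2n})$ for $j=1,\dots,2n$ with
\[
p(y,\y,x,\x)=\sum_{j=1}^{2n}b_j(y,\y,x,\x)\bigpare{(f_j\circ S^{-1})(y,\y)-f_j(x,\x)}+r(y,\y,x,\x).
\]
The proof mirrors that of Lemma~\ref{lem0204a}: perform a Taylor expansion along the segment from $(x,\x)$ to $(z,\z)=S^{-1}(y,\y)$, and choose $f_j=x_j|\x|\rho$ and $f_{n+j}=\x_j\rho$ with the same cutoffs. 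Since $S$ is only asymptotically homogeneous, the difference $S^{-1}-S_c^{-1}$ contributes correction terms in $S^{-\e}_{1,0}$ and $S^{1-\e}_{1,0}$; these are absorbed into the remainder $r$ and into the coefficients $b_j$. The positive elliptic factor $\int_0^1|t\x+(1-t)\z|\,dt$ remains bounded below on the narrow convex conic neighborhood for large $|\x|$, so the inversion step producing $b_j$ goes through.

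With this lemma, the remaining arguments transcribe almost verbatim from the proof of Theorem~\ref{thm0201a}: one first establishes $\WF(u)\subset\L_{S_c}$ via the cutoff trick with micro-supports separated by $S_c$ (the $C^\infty$ wave front set is conic, so only the conic limit $\L_{S_c}$ can constrain it); then, for $p_1,\dots,p_N\in S^1_{1,0}(\re^{2n})$ vanishing on $\tilde\L_S$, one applies a microlocal partition of unity subordinate to narrow convex conic neighborhoods of points of $\tilde\L_S$, substitutes the decomposition into each factor, and rearranges via symbol calculus to express $\prod_j p_j(y,D_y,x,-D_x)u$ as a finite sum of kernels $B(y,D_y,x,-D_x)\,\mbox{ker}\bigbrac{Ad_S(f_{k_1})\cdots Ad_S(f_{k_N})U}$ with $B\in S^0_{1,0}(\re^{2n})$, plus a remainder bounded in $B^{-n/2,\infty}_{2,loc}$. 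The hypothesis together with Lemma~\ref{lem0203a} then yields the Besov regularity required by Definition~\ref{defn0403a}. The main obstacle is the symbolic bookkeeping in the asymptotic version of Lemma~\ref{lem0204a}, where one must verify that the non-homogeneous corrections to $S^{-1}$ stay within $S^0_{1,0}$ rather than escape to a larger class; the algebra is the same as in Lemma~\ref{lem0204a}, but Definition~\ref{defn0404a} must be invoked repeatedly to control error terms.
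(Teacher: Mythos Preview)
Your proposal is correct and follows essentially the same approach as the paper: the paper states that the proof of Theorem~\ref{thm0402a} is almost identical to that of Theorem~\ref{thm0201a}, invoking an $L^2$-boundedness theorem for the asymptotically homogeneous class in the ``only if'' part and replacing Lemma~\ref{lem0204a} by its $S^1_{1,0}$ analogue (Lemma~\ref{lem0403a}), whose statement and proof match what you have outlined. Your remark that $\WF(u)$ is contained in the conic limit $\L_{S_c}$ is a correct refinement of the corresponding step.
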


The proof of Theorem~\ref{thm0402a} is almost the same as that of Theorem~\ref{thm0201a}. In the ``only if'' part, 
we use the fact that $L^2$-boundedness theorem holds for FIOs associated to 
asymptotically homogeneous canonical transforms. An analogue of Lemma\ref{lem0204a} is given 
as follows:

\begin{lem}\label{lem0403a}
Let $S$ be an asymptotically homogeneous canonical transform as above. 
Let $p\in S^1_{1,0}(\re^{2n})$ such that $p$ vanishes on 
\[
\tilde\L_S=\bigset{(y,\y,x,\x)}{(y,\y)=S(x,\x)}.
\]
Then there exist
$b_j\in S^0_{1,0}(\re^{2n})$, $f_j\in S^1_{1,0}(\re^n)$, $(j=1,2,\dots, 2n)$, and 
$r\in S^0_{1,0}(\re^{2n})$ such that 
\[
p(y,\y,x,\x)=\sum_{j=1}^{2n}b_j(y,\y,x,\x)
\bigpare{(f_j\circ S^{-1})(y,\y)-f_j(x,\x)}+r(y,\y,x,\x).
\]
\end{lem}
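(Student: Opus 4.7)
The plan is to adapt the proof of Lemma~\ref{lem0204a} to the asymptotically homogeneous setting, replacing the classical symbol class $S^m_{cl}$ by $S^m_{1,0}$ throughout. As in the homogeneous case, the argument is local in the conormal direction to $\tilde\L_S$, so by a microlocal partition of unity I may assume $p$ is supported in a narrow convex conic neighborhood of a fixed point $(y_0,\y_0,x_0,\x_0)\in\tilde\L_S$ with $(y_0,\y_0)=S(x_0,\x_0)$.

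Writing $(z,\z)=S^{-1}(y,\y)$, the vanishing of $p$ on $\tilde\L_S$ gives $p(y,\y,z,\z)=0$, and the fundamental theorem of calculus applied to $t\mapsto p(y,\y,tx+(1-t)z,t\x+(1-t)\z)$, together with a cutoff $\chi$ equal to one on $\supp p$ and compactly supported in a slightly larger narrow cone, yields
\[
p(y,\y,x,\x) = \sum_{j=1}^n (x_j-z_j)\,g_j(y,\y,x,\x) + \sum_{j=1}^n (\x_j-\z_j)\,g_{n+j}(y,\y,x,\x),
\]
with $g_j\in S^1_{1,0}(\re^{2n})$ for $j=1,\dots,n$ and $g_j\in S^0_{1,0}(\re^{2n})$ for $j=n+1,\dots,2n$, by a computation identical to that leading to \eqref{eq0203a}. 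Next, setting $f_j(x,\x)=x_j|\x|\rho(x,\x)$ and $f_{n+j}(x,\x)=\x_j\rho(x,\x)$ for $j=1,\dots,n$, where $\rho\in S^0_{1,0}(\re^n)$ is a conic cutoff equal to one on a conic neighborhood containing both the $(x,\x)$- and the $(z,\z)$-projections of $\supp\chi$, the same integral identity applied to $f_j$ produces
\[
(f_j\circ S^{-1})(y,\y)-f_j(x,\x) = -(x_j-z_j)\,h(y,\y,x,\x) + r_j,
\]
and analogously $(f_{n+j}\circ S^{-1})(y,\y)-f_{n+j}(x,\x) = -(\x_j-\z_j)$, where $h(y,\y,x,\x)=\int_0^1|t\x+(1-t)\z|\,dt$ and $r_j\in S^0_{1,0}(\re^{2n})$. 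On the narrow support of $\chi$ the vectors $\x$ and $\z$ are nearly parallel, so $h\geq c|\x|$, and one may divide by $h$ to obtain $b_j\in S^0_{1,0}$ with the asserted decomposition.

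The substantive difference from Lemma~\ref{lem0204a}, and the main place to exercise care, lies in the asymptotic (rather than exact) homogeneity of $S$. By Definition~\ref{defn0404a}, $y-y_c\in S^{-\e}_{1,0}$ and $\y-\y_c\in S^{1-\e}_{1,0}$ with $\e>0$, and analogous bounds transfer to $z-z_c$ and $\z-\z_c$ via the inverse function theorem. These estimates are precisely what is needed to verify (i) that the integrals defining $g_j$, $h$, and $r_j$ produce symbols of the stated orders in $S^m_{1,0}$ uniformly in all four variables, (ii) that $h\geq c|\x|$ on $\supp\chi$ for $|\x|$ large so that division by $h$ is legitimate in $S^0_{1,0}$, and (iii) that corrections arising from the difference $S-S_c$ contribute only $S^0_{1,0}$ remainders rather than $S^1_{1,0}$ principal terms, since each such correction gains a factor $\jap{\y}^{-\e}$. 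No classical homogeneity is used anywhere, which is precisely why the conclusion lives in the $S^m_{1,0}$ scale.
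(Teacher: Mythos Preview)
Your proposal is correct and follows precisely the approach the paper intends: the paper itself only remarks that Lemma~\ref{lem0403a} ``is proved in almost the same manner as Lemma~\ref{lem0204a},'' and your write-up supplies exactly that adaptation, with the correct identification of the one new ingredient---namely, that the asymptotic homogeneity estimates $y-y_c\in S^{-\e}_{1,0}$, $\y-\y_c\in S^{1-\e}_{1,0}$ (and their analogues for $S^{-1}$) suffice to keep all the symbol computations in the $S^m_{1,0}$ scale and to guarantee $h\geq c|\x|$ on the narrow support. One minor remark: your opening reduction by partition of unity is really the same localization step that the paper performs in the proof of Theorem~\ref{thm0201a} rather than inside Lemma~\ref{lem0204a} itself (which already assumes narrow conic support); since Lemma~\ref{lem0403a} as stated omits that hypothesis, your reduction is appropriate, though strictly speaking it yields a finite sum of $2n$-term decompositions with possibly different $f_j$'s on each piece---harmless for the application to Theorem~\ref{thm0402a}, and consistent with the paper's informal treatment.
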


Lemma~\ref{lem0403a} is proved in almost the same manner as Lemma~\ref{lem0204a}.
Then the rest of the proof of Theorem~\ref{thm0402a} is just follows from 
the argument of Theorem~\ref{thm0201a}, and we omit the detail. 

\begin{proof}[Proof of Theorem~\ref{thm0102a}]
Given the above formulation, the proof of Theorem~\ref{thm0402a} is similar to that of Theorem~\ref{thm0101a}.
Here we explain only the necessary modifications. When we construct the asymptotic solution 
to the Heisenberg equation: $\pa_t G(t)=i[L(t),G(t)]$, we use 
\[
\g_0(t,x,\x)= (a\circ w(t)^{-1})(x,\x)
\]
instead of $(a\circ w_0(t)^{-1})(x,\x)$ in Section~3. By virtue of Lemma~\ref{lem0a02a}, we learn 
$\g_0\in S^1_{1,0}(\re^n)$, and we can carry out the symbol calculus as in Section~3 
with no difficulty. Then the remainder terms of the asymptotic expansion (e.g., $r_0$ in 
the proof of Lemma~\ref{lem0302a}) is in $S^0_{1,0}(\re^n)$ even if $1<\m<2$, since $w(t)$ includes 
the influence of the potential function $V(x)$. The rest of the proof is almost identical. 
\end{proof}


\section{Microlocal structure of wave operators}

Throughout this section, we suppose Assumption~\ref{ass0ba} with $1<\m<2$, and we prove 
Theorem~\ref{thm0103a}.
We use an argument analogous to Lemma~\ref{lem0302a}, but we need to examine the $t$-dependence 
of seminorms more carefully. We note 
\begin{align}
&\bigabs{\pa_x^\a\pa_\x^\b \ell_0(t,x,\x)}\leq C_{\a\b K}\jap{t\x}^{-\m-|\a|}\jap{\x}^{2-|\b|}, \label{eq0501a}\\
&\bigabs{\pa_x^\a\pa_\x^\b \tilde V(t,x,\x)}\leq C_{\a\b K}\jap{t\x}^{-\m-|\a|}\jap{\x}^{-|\b|}\label{eq0502a}
\end{align}
for any $\a,\b\in\ze_+^n$, $K\Subset \re^n$, where
$\tilde V(t,x,\x)=V(x-t\x)$.
For $a\in S^1_{cl}(\re^n)$ satisfying \eqref{eq0201a},  
we set $\g_0(t,x,\x)=(a\circ w_0(t)^{-1})(x,\x)$ as in Section~3. Then we have 
\[
\bigabs{\g_0(t,\cdot,\cdot)}_{1,L,K}\leq C_{L,K}, \quad t\in\re,
\]
for any $L>0$, $K\Subset\re^n$. Moreover, by \eqref{eq0501a} and \eqref{eq0502a}, we also have 
\[
\bigabs{r_0(t,\cdot,\cdot)}_{0,L,K}\leq C_{L,K}\jap{t}^{-\m},\quad t\in\re, 
\]
where $r_0$ is defined as in the proof of Lemma~\ref{lem0302a}. 
Here we have used the fact $|\x|\geq c>0$ for all $t$ on the support of $\g_0$. 
Thus, since the solution to the transport equation \eqref{eq0301a} is uniformly bounded, 
we have 
\[
\bigabs{\g_1(t,\cdot,\cdot)}_{0,L,K}\leq C_{L,K}, \quad t\in\re.
\]
We repeat this procedure. We set $\g_j$ be the solution to the transport equations:
\[
\frac{\pa}{\pa t}\g_j(t,x,\x) +\{\ell_0,\g_j\}(t,x,\x)=-r_{j-1}(t,x,\x)
\]
with $\g_j(0,x,\x)=0$ (as in the proof of lemma~3.2), and we set 
$r_j\in S^{-j}_{1,0}$ such that 
\[
r_j^W\!(t,x,D_x) =\frac{\pa}{\pa t}\g_j^W\!(t,x,D_x)+i[L(t),\g_j^W\!(t,x,D_x)]+r_{j-1}^W(t,x,D_x)
\]
given $\g_j\in S^{1-j}_{1,0}(\re^n)$. Then we learn, similarly as above, 
\begin{equation}\label{eq0503a}
\bigabs{r_j(t,\cdot,\cdot)}_{-j,L,K}\leq C_{jLK}\jap{t}^{-\m}, \quad
\bigabs{\g_j(t,\cdot,\cdot)}_{1-j,L,K}\leq C_{jLK}, 
\end{equation}
uniformly in $t\in\re$ with any $L>0$, $K\Subset\re^n$, for each $j$.  
By \eqref{eq0503a} and the transport equations, we learn
\[
\g_{j,\pm}(x,\x)=\lim_{t\to\pm\infty} \g_j(t,x,\x)\in S^{1-j}_{1,0}(\re^n), 
\quad j=0,1,2,\dots,
\]
exist, and they converges with respect to the seminorms in $S^{1-j}_{1,0}(\re^n)$
by virtue of Lemma~\ref{lem0a01a}. More precisely, we have
\begin{equation}\label{eq0504a}
\bigabs{\g_j(t,\cdot,\cdot)-\g_{j,\pm}(\cdot,\cdot)}_{1-j,L,K}\leq C_{jLK}\jap{t}^{1-\m}, 
\quad t\in\re
\end{equation}
for all $j$. We note 
\[
\g_{0,\pm}(x,\x)=(a\circ w_\pm^{-1})(x,\x)
\]
by our construction. 

We now construct the asymptotic sum: $\g\sim\sum_{j=0}^\infty \g_j$ as follows. 
We choose $K\Subset\re^n$ so large that all symbols in the above construction 
are supported in $K\times\re^n$ for all $t$. We choose $\e_j>0$ so that 
\[
\sup\bigset{\jap{\x}^{j-2}\abs{\pa_x^\a\pa_\x^\b\g_j(t,x,\x)}}{|\a|+|\b|\leq j, x\in K, |\x|\geq \e_j^{-1}, 
t\in\re}\leq 2^{-j},
\]
which is possible since $|\g_j(t,\cdot,\cdot)|_{1-j,L,K}$ is uniformly bounded in $t$. 
We let $\chi\in C^\infty(\re^n)$ such that $\chi(\x)=0$ for $|\x|\leq 1$, and 
$\chi(\x)=1$ for $|\x|\geq 2$. Then we set
\[
\g(t,x,\x)=\sum_{j=0}^\infty \chi(\e_j\x)\g_j(t,x,\x).
\]
By the standard argument, we learn $\g(t,\cdot,\cdot)\in S^1_{1,0}(\re^n)$ for all $t\in\re$. 
Moreover, by the same argument with \eqref{eq0504a}, we learn 
\[
\g_\pm(x,\x)=\lim_{t\to\pm\infty}\g(t,x,\x)\in S^1_{1,0}(\re^n)
\]
exist, and they converge with respect to seminorms of $S^1_{1,0}(\re^n)$. 

\begin{lem}\label{lem0501a}
Let $\g$ as above, and let $r\in S^1_{1,0}(\re^n)$ such that 
\[
r^W\!(t,x,D_x)=\frac{\pa}{\pa t}\g^W\!(t,x,D_x) +i[L(t),\g^W\!(t,x,D_x)].
\]
Then  $r(t,\cdot,\cdot)\in S^{-\infty}_{1,0}(\re^n)$ for each $t\in\re$, and for any $N$, 
\[
\bigabs{r(t,\cdot,\cdot)}_{-N,L,K}\leq C_{NLK}\jap{t}^{-\m}.
\]
\end{lem}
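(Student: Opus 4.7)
The plan is a truncation-and-telescoping argument combining the defining recurrences for $(\g_j,r_j)$ with the pointwise estimates \eqref{eq0501a}-\eqref{eq0502a} and the uniform-in-$t$ bounds \eqref{eq0503a}. Fix an integer $N\geq 1$; I will show $r(t,\cdot,\cdot)\in S^{-N}_{1,0}(\re^n)$ with $\bigabs{r(t,\cdot,\cdot)}_{-N,L,K}\leq C_{NLK}\jap{t}^{-\m}$, which since $N$ is arbitrary yields both conclusions. Choose $N'\geq N+2$ and split
\[
\g(t,x,\x)=\g^{(N')}(t,x,\x)+\rho^{(N')}(t,x,\x),
\]
where $\g^{(N')}=\sum_{j<N'}\chi(\e_j\x)\g_j$ and $\rho^{(N')}=\sum_{j\geq N'}\chi(\e_j\x)\g_j$. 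The choice of $\e_j$ in the construction of $\g$ ensures $\rho^{(N')}(t,\cdot,\cdot)\in S^{1-N'}_{1,0}$ with seminorm bounds uniform in $t$.

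For the finite part, the recurrence gives $\pa_t\g_j^W+i\bigbrac{L(t),\g_j^W}=r_j^W-r_{j-1}^W$ (with $r_{-1}:=0$), so summing over $j<N'$ without the cutoffs telescopes to $r_{N'-1}^W$, and by \eqref{eq0503a}, $r_{N'-1}\in S^{1-N'}_{1,0}\subset S^{-N}_{1,0}$ with $\bigabs{r_{N'-1}(t,\cdot,\cdot)}_{1-N',L,K}\leq C\jap{t}^{-\m}$. Reinstating the cutoffs contributes $\sum_{j<N'}(\chi(\e_j\x)-1)\g_j$, a symbol supported in a bounded $\x$-annulus bounded away from $\x=0$. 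On such a set, \eqref{eq0501a}-\eqref{eq0502a} give $\bigabs{\pa^\a_x\pa^\b_\x\ell}\leq C\jap{t}^{-\m-|\a|}$, so both $\pa_t\bigbrac{(\chi(\e_j\x)-1)\g_j}=(\chi(\e_j\x)-1)\bigpare{-\{\ell_0,\g_j\}-r_{j-1}}$ and the Weyl commutator with $L^W$ yield $S^{-\infty}$ contributions with $\jap{t}^{-\m}$ decay.

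The key step is the tail. For each $j\geq N'$, the transport equation for $\g_j$ gives
\[
\pa_t\bigpare{\chi(\e_j\x)\g_j}+\{\ell_0,\chi(\e_j\x)\g_j\}=-\chi(\e_j\x)r_{j-1}+\{\ell_0,\chi(\e_j\x)\}\g_j.
\]
The first term is $O(\jap{t}^{-\m})$ in $S^{1-j}\subset S^{1-N'}$ by \eqref{eq0503a}. The second is supported on $\e_j|\x|\in[1,2]$; by \eqref{eq0501a}, on this support $\bigabs{\pa_x\ell_0}\leq C\jap{t\x}^{-\m-1}\jap{\x}^2$, which after summing over $j\geq N'$ (only finitely many terms contribute for each $\x$) yields an $S^{-\infty}$ symbol with $\jap{t}^{-\m-1}$ decay. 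Adding the $\{\tilde V,\rho^{(N')}\}$ piece (which is $O(\jap{t}^{-\m})$ in $S^{-N'}$ by \eqref{eq0502a}) together with the symbol calculus corrections of order $\leq -N'$ (whose $t$-decay comes from further derivatives of $\ell$ through \eqref{eq0501a}-\eqref{eq0502a}), the total contribution of $\rho^{(N')}$ to $r$ lies in $S^{-N'}\subset S^{-N}$ with an $O(\jap{t}^{-\m})$ bound.

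The main technical obstacle is the tail step: one needs $\jap{t}^{-\m}$ decay, not merely uniform boundedness in $t$, for $\rho^{(N')}$. The essential mechanism is that the transport-equation cancellation $\pa_t\g_j+\{\ell_0,\g_j\}=-r_{j-1}$ removes the $O(1)$-in-$t$ principal part of $\{\ell_0,\rho^{(N')}\}$ against $\pa_t\rho^{(N')}$, leaving residuals governed by the $\jap{t}^{-\m}$ decay of the $r_{j-1}$, of $\tilde V$, and of the $x$-derivatives of $\ell_0$ from \eqref{eq0501a}-\eqref{eq0502a}. Combining all three steps yields $\bigabs{r(t,\cdot,\cdot)}_{-N,L,K}\leq C_{NLK}\jap{t}^{-\m}$.
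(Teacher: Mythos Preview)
Your decomposition into a finite truncation plus a tail, with the finite part handled by telescoping to $r_{N'-1}$ and cut-off corrections, is exactly the paper's split $\mathrm{I}+\mathrm{II}+\mathrm{III}$ (with $M=N'-1$). The treatment of the finite part is fine and matches the paper.

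There is, however, a genuine gap in your handling of the tail. After applying the transport equation you arrive at
\[
\pa_t\tilde\g_j+\{\ell_0,\tilde\g_j\}=-\chi(\e_j\x)r_{j-1}+\{\ell_0,\chi(\e_j\x)\}\g_j,
\]
and you then claim that summing the first term over $j\geq N'$ gives a symbol in $S^{1-N'}$ with $O(\jap{t}^{-\m})$ bounds. But the bound $|r_{j-1}|_{1-j,L,K}\leq C_{jLK}\jap{t}^{-\m}$ from \eqref{eq0503a} has constants $C_{jLK}$ that depend on $j$, and the supports $\{|\x|\geq 1/\e_j\}$ are nested (not disjoint annuli), so infinitely many terms contribute for large $|\x|$. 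Nothing you have written forces $\sum_{j\geq N'}\chi(\e_j\x)r_{j-1}$ to converge in $S^{-N}$ with a $t$-uniform constant; the $\e_j$ were chosen against the seminorms of $\g_j$, not of $r_{j-1}$. Your ``only finitely many terms contribute for each $\x$'' remark applies only to the second (annulus-supported) term.

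The paper avoids this by \emph{not} using the transport cancellation for the tail. Instead it estimates $\pa_t\tilde\g_j$ and $i[L(t),\tilde\g_j^W]$ separately and extracts the crude bound \eqref{eq0505a}, namely $|\tilde r_j|_{2-j,L,K}\leq C_{LK}\jap{t}^{-\m}2^{-j}$ for $j\geq L+1$, with $C_{LK}$ \emph{independent of $j$}. The $2^{-j}$ comes directly from the defining condition on $\e_j$ applied to $\tilde\g_j$, and gives summability of $\mathrm{III}$ in one line. To repair your argument you should either invoke this $2^{-j}$ mechanism for the tail, or go back and strengthen the choice of $\e_j$ so that it also controls the $r_{j-1}$ seminorms.
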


\begin{proof}
We write $\tilde\g_j(t,x,\x)=\chi(\e_j\x)\g_j(t,x,\x)$ and set $\tilde r_j(t,x,\x)\in S^{-j}_{1,0}$ so that
\[
\frac{\pa}{\pa t}\tilde \g_j^W\!(t,x,D_x)+i[L(t),\tilde\g_j^W\!(t,x,D_x)]=\tilde r_j^W\!(t,x,D_x).
\]
By estimating $\pa_t\tilde\g_j$ and $[L(t),\tilde\g_j^W\!(t,x,D_x)]$ separately, we obtain 
rather crude estimates:
\begin{equation}\label{eq0505a}
\bigabs{\tilde r_j(t,\cdot,\cdot)}_{2-j,L,K}\leq C_{LK}\jap{t}^{-\m} 2^{-j}, \quad t\in\re,
\end{equation}
if $j\geq L+1$, where $C_{LK}$ is independent of $t$ and $j$. 
Similarly we have 
\begin{equation}\label{eq0506a}
\bigabs{\tilde r_j(t,\cdot,\cdot)-r_j(t,\cdot,\cdot)}_{-N,L,K}\leq C_{jNLK}\jap{t}^{-\m}
\end{equation}
for each $j$. Now we compute 
\begin{align*}
r^W\!(t,x,D_x) &=\sum_{j=0}^\infty \biggpare{\frac{\pa}{\pa t}\tilde\g_j^W\!(t,x,D_x)
+i\bigbrac{L(t),\tilde\g_j^W\!(t,x,D_x)}}\\
&=\1+\2+\3,
\end{align*}
where 
\begin{align*}
\1&=\sum_{j=0}^M \biggpare{\frac{\pa}{\pa t}\g_j^W\!(t,x,D_x)
+i\bigbrac{L(t),\g_j^W\!(t,x,D_x)}} \\
&=r_M^W\!(t,x,D_x),\\
\2&= \sum_{j=0}^M  \biggpare{\frac{\pa}{\pa t}(\tilde\g_j-\g_j)^W\!(t,x,D_x)
+i\bigbrac{L(t),(\tilde\g_j-\g_j)^W\!(t,x,D_x)}} \\
&=\sum_{j=0}^M \bigpare{(\tilde r_j-r_j)^W\!(t,x,D_x)}, \\
\3&= \sum_{j=M+1}^\infty  \biggpare{\frac{\pa}{\pa t}\tilde\g_j^W\!(t,x,D_x)
+i\bigbrac{L(t),\tilde\g_j^W\!(t,x,D_x)}} \\
&=\sum_{j=M+1}^\infty \tilde r_j^W\!(t,x,D_x),
\end{align*}
where $M= \max(N+2,L)$. 
Here we denote the symbol of an operator $A$ by $\s(A)$. Then we have
$|\s(\1)|_{-N,L,K}\leq C\jap{t}^{-\m}$
by \eqref{eq0503a}. Using \eqref{eq0506a}, we also have $
|\s(\2)|_{-N,L,K}\leq C\jap{t}^{-\m}$.
Finally we have 
\[
|\s(\3)|_{-N,L,K}\leq C\sum_{j=M+1}^\infty \jap{t}^{-\m} 2^{-j}
\leq C'\jap{t}^{-\m}
\]
by \eqref{eq0505a}, and the claim follows from these inequalities. 
\end{proof}

\begin{proof}[Proof of Theorem 1.3]
Let $R(t)=r^W\!(t,x,D_x)$. Lemma~\ref{lem0501a} implies 
\[
\norm{R(t)}_{\mathcal{L}(H^{-N},H^N)}\leq C_N\jap{t}^{-\m}, 
\quad t\in\re
\]
for any $N$. By our construction, we have 
\[
W(t)^{-1}G(t)W(t)-a^W\!(x,D_x)=\int_0^t W(s)^{-1}R(s) W(s)ds,
\]
where $G(t)=\g^W\!(t,x,D_x)$. Hence we have 
\begin{equation}\label{eq0507a}
W(t)^{-1}G(t) -a^W\!(x,D_x)W(t)^{-1} =\int_0^t W(s)^{-1}R(s)W(s)W(t)^{-1}ds.
\end{equation}
We note 
\begin{align*}
\bignorm{W(t)}_{\mathcal{L}(H^N,H^N)}
&=\bignorm{\jap{D_x}^N e^{itH_0}e^{-itH}\jap{D_x}^{-N}}_{\mathcal{L}(L^2)}\\
&\leq \bignorm{\jap{D_x}^N\jap{H}^{-N/2}}_{\mathcal{L}(L^2)}
\bignorm{\jap{H}^{N/2}\jap{D_x}^{-N}}_{\mathcal{L}(L^2)}
\end{align*}
is bounded uniformly in $t\in\re$ with any $N\in\re$. Hence we learn 
\[
\bignorm{W(s)^{-1}R(t)W(s) W(t)^{-1}}_{\mathcal{L}(H^{-N},H^N)}\leq C_N\jap{t}^{-\m}
\]
and then the RHS of \eqref{eq0507a} converges absolutely in $\mathcal{L}(H^{-N},H^N)$ as 
$t\to\pm \infty$. On the other hand, $G(t)$ converges to $\g^W_\pm\!(x,D_x)$ in 
$OPS^1_{1,0}(\re^n)$, and hence in $\mathcal{L}(H^1,L^2)$ as $t\to\pm\infty$. 
By the definition of wave operators, we then have 
\begin{align*}
&W(t)^{-1}G(t) \to W_\pm \g_\pm^W\!(x,D_x), \\
&a^W\!(x,D_x)W(t)^{-1}\to a^W\!(x,D_x)W_\pm
\end{align*}
strongly in $\mathcal{L}(H^1,H^{-1})$ as $t\to\pm\infty$. 
Thus we learn
\[
W_\pm \g_\pm^W\!(x,D_x) -a^W\!(x,D_x)W_\pm =R\in \mathcal{L}(H^{-N},H^N)
\]
with any $N$. Since $\g_\pm-a\circ w_\pm^{-1}\in S^0_{1,0}(\re^n)$, 
i.e., $a-\g_\pm\circ w_\pm\in S^0_{1,0}(\re^n)$, Theorem~\ref{thm0103a} now follows from 
Corollary~\ref{cor0202a}.
\end{proof}


\appendix

\section{Classical trajectories}

Here we prove several technical inequalities. 

\begin{lem}\label{lem0a01a}
Suppose Assumption A with $\m>0$, and assume the global nontrapping condition. 
Let
\begin{align*}
&(z(t,x,\x),\y(t,x,\x))=w_0(t)(x,\x)=\exp(-tH_{p_0})\circ \exp(tH_k)(x,\x), \\
&(z_\pm(x,\x),\x_\pm(x,\x))=w_\pm(x,\x)=\lim_{t\to\pm\infty} w_0(t)(x,\x).
\end{align*}
Then for any $\a,\b\in\ze_+^n$ and $K\Subset \re^n$ there is $C_{\a\b K}>0$ such that  
\begin{equation}\label{eq0a01a}
\bigabs{\pa_x^\a\pa_\x^\b z(t,x,\x)}\leq C_{\a\b K}\jap{\x}^{-|\b|},\quad 
\bigabs{\pa_x^\a\pa_\x^\b \y(t,x,\x)}\leq C_{\a\b K}\jap{\x}^{1-|\b|}
\end{equation}
and, moreover, 
\begin{align}
&\bigabs{\pa_x^\a\pa_\x^\b (z(t,x,\x)-z_\pm(x,\x))}\leq C_{\a\b K}\jap{\x}^{-|\b|}
\jap{t\x}^{-\m+1}, \label{eq0a02a}\\
&\bigabs{\pa_x^\a\pa_\x^\b (\y(t,x,\x)-\x_\pm(x,\x))}\leq C_{\a\b K}\jap{\x}^{1-|\b|}
\jap{t\x}^{-\m}\label{eq0a03a}
\end{align}
for $x\in K$, $\x\in\re^n$. 
\end{lem}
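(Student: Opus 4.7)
The plan is to reduce everything to the case of unit initial momentum via the homogeneity of $k$, prove the desired bounds for that reference flow by classical variational-equation/Gr\"onwall arguments along nontrapping trajectories, and then pass back via a polar-coordinate chain rule. Since $k(x,\x)$ is homogeneous of degree $2$ in $\x$, the kinetic flow satisfies
\[
\exp(tH_k)(x,\l\omega)=\bigpare{Y(\l t;x,\omega),\,\l E(\l t;x,\omega)},\qquad \l>0,
\]
where $(Y(s;x,\omega),E(s;x,\omega))$ denotes the flow from $(x,\omega)$. Writing $\x=\l\omega$ with $\l=|\x|$, $\omega=\hat\x$, and $s=\l t$, one obtains
\[
z(t,x,\x)=Y(s)-sE(s),\qquad \y(t,x,\x)=\l E(s),
\]
and correspondingly $z_\pm(x,\x)=z_\pm(x,\omega)$, $\x_\pm(x,\x)=\l E_\pm(x,\omega)$ where $E_\pm=\lim_{s\to\pm\infty}E(s)$ and $z_\pm(x,\omega)=\lim_{s\to\pm\infty}(Y(s)-sE(s))$. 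Thus $s$ plays the role of $|\x|t$ throughout, producing the factor $\jap{t\x}$ in the target bounds, while factors of $\l^{-1}$ generated by the chain rule in $\x$ will account for the $\jap{\x}^{-|\beta|}$ decay.

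For $|\omega|=1$ and $x\in K\Subset\re^n$, the global nontrapping condition gives $|Y(s)|\sim|s|$ as $|s|\to\infty$, hence $\jap{Y(s)}\sim\jap{s}$. By Assumption~A, $|\dot E(s)|=\bigabs{\tfrac12(\pa_y a_{jk})(Y)E_jE_k}\le C\jap{s}^{-\m-1}$, so $E(s)\to E_\pm$ with $|E(s)-E_\pm|\le C\jap{s}^{-\m}$; moreover
\[
\tfrac{d}{ds}(Y-sE)=\sum(a_{jk}(Y)-\d_{jk})E_k-s\dot E = O(\jap{s}^{-\m}),
\]
whence $|(Y-sE)-z_\pm|\le C\jap{s}^{-\m+1}$ (for $\m>1$). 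For derivatives, the Jacobian $\pa_{(x,\omega)}(Y,E)$ satisfies a linear ODE $\dot J=M(s)J$ with $M(s)$ built from $(\pa_y a_{jk})(Y)E$ and $(\pa_y^2 a_{jk})(Y)E\otimes E$, so $\norm{M(s)}=O(\jap{s}^{-\m-1})$, which is integrable on $\re$. Gr\"onwall yields uniform boundedness of $J(s)$, and integration produces the limit $J_\pm$ with rate $\jap{s}^{-\m}$. Induction on the multi-index $\gamma$ extends the same uniform bound and rate to every $\pa^\gamma_{(x,\omega)}(Y,E)$: the variational equation at level $|\gamma|$ has the same homogeneous part, with inhomogeneities built (via Fa\`a di Bruno) from strictly lower-order derivatives paired with $O(\jap{s}^{-\m-1})$ derivatives of $a_{jk}$ along $Y$.

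To pass back to $(x,\x)$-derivatives I use the polar chain rule $\pa_{\x_j}=\omega_j\pa_\l+\l^{-1}\pa^T_{\omega_j}$, where $\pa^T_\omega$ is the tangential derivative on $S^{n-1}$. Each tangential derivative produces a factor $\l^{-1}=|\x|^{-1}$, while $\pa_\l$ applied to $Y(\l t)$ or $E(\l t)$ produces a factor $t$ which combines with $\l$ from $\y=\l E$ to give $s=\l t$, absorbed in $\jap{t\x}$. Substituting the previous estimates into the formulas for $z$, $\y$, $z_\pm$, $\x_\pm$ and counting powers of $\l$ yields \eqref{eq0a01a}--\eqref{eq0a03a} in the regime $|\x|\ge 1$; the complementary regime $|\x|\le 1$ reduces to smooth dependence of the flow on initial data on compact time intervals together with homogeneity to bridge, say, $|\x|\in[1/2,1]$. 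The main obstacle is the inductive bookkeeping, namely verifying that at each order the inhomogeneous terms in the variational equation retain integrable decay $O(\jap{s}^{-\m-1})$ so that the asymptotic derivatives exist with rate $\jap{s}^{-\m}$, and that no spurious powers of $\l$ are produced by the chain rule in $\x$; both points are routine but tedious.
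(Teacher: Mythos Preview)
Your overall strategy---reducing via homogeneity to the unit-momentum flow and identifying $\jap{t\x}$ with the rescaled time $s=|\x|t$---is sound, and the chain-rule bookkeeping at the end is correct. But there is a concrete error in your variational-equation step. The system for $(Y,E)$ reads
\[
\dot Y_j=\sum_k a_{jk}(Y)E_k,\qquad
\dot E_j=-\tfrac12\sum_{k,l}(\pa_{y_j}a_{kl})(Y)E_kE_l,
\]
so the linearized matrix $M(s)$ contains the block $A(Y)=(a_{jk}(Y))$ coupling $\pa E$ into $\pa\dot Y$. That block is $I+O(\jap{s}^{-\m})$, not $O(\jap{s}^{-\m-1})$; hence $J(s)=\pa_{(x,\omega)}(Y,E)$ is \emph{not} uniformly bounded. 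Already the free flow gives $Y=x+s\omega$, $\pa_\omega Y=sI$, which grows linearly, so the claimed limit $J_\pm$ with rate $\jap{s}^{-\m}$ cannot hold.

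The fix is to track $(Z,E)$ with $Z=Y-sE$ instead of $(Y,E)$. Since $\dot Z=(A(Y)-I)E-s\dot E=O(\jap{s}^{-\m})$, and one substitutes $\pa Y=\pa Z+s\,\pa E$ wherever $\pa Y$ appears, the linearized system for $\pa_{(x,\omega)}(Z,E)$ has coefficients bounded by $C\jap{s}^{-\m}$ (each stray factor of $s$ is paired with an extra $\pa_y$ on $a_{jk}$). Gr\"onwall then gives uniform bounds for $\m>1$, and integrating from $s$ to $\pm\infty$ yields the rates $\jap{s}^{1-\m}$ for $\pa Z$ and $\jap{s}^{-\m}$ for $\pa E$; the higher-order induction proceeds as you outline. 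This is, up to your reparametrization, exactly what the paper does: it works directly with $(z,\y)=w_0(t)(x,\x)$, which is your $(Z,\l E)$, using the time-dependent generator $\ell_0(t,z,\y)=\tfrac12\sum(a_{jk}(z-t\y)-\d_{jk})\y_j\y_k$; the built-in $-\d_{jk}$ removes precisely the $O(1)$ block that was causing linear growth in your version, and the paper runs the same Duhamel/Gr\"onwall induction localized to $R\le|\x|\le 2R$ in place of your homogeneity scaling.
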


\begin{proof}
We note 
\begin{equation}\label{eq0a04a}
\frac{\pa}{\pa t} z_j =\frac{\pa\ell_0}{\pa \y_j}(z,\y), \quad 
\frac{\pa}{\pa t}\y_j=-\frac{\pa \ell_0}{\pa z_j}(z,\y)
\end{equation}
and $\ell_0$ satisfies 
\[
\bigabs{\pa_z^\a\pa_\y^\b\ell_0(z,\y)}\leq 
C_{\a\b K}\jap{t\y}^{-\m-|\a|}\jap{\y}^{2-|\b|}
\]
for $z\in K\Subset \re^n$. We first show \eqref{eq0a01a} by induction in $|\a|+|\b|=m$. 
If $\a=\b=0$, \eqref{eq0a01a} is well-known (see, e.g, \cite{Na1}, Lemma~3.). 
Suppose \eqref{eq0a01a} holds for $|\a|+|\b|<m$, and let $|\a|+|\b|=m$. 
By differentiating \eqref{eq0a04a}, we have 
\begin{align}
&\frac{\pa}{\pa t}\bigpare{\pa_x^\a\pa_\x^\b z_j} 
=\sum_{k=1}^n \bigpare{\pa_x^\a\pa_\x^\b z_k}\frac{\pa^2\ell_0}{\pa z_k\pa\y_j} 
+\sum_{k=1}^n \bigpare{\pa_x^\a\pa_\x^\b \y_k}\frac{\pa^2\ell_0}{\pa\y_k\pa\y_j}+r_1,\label{eq0a05a}\\
&\frac{\pa}{\pa t}\bigpare{\pa_x^\a\pa_\x^\b \y_j} 
=-\sum_{k=1}^n \bigpare{\pa_x^\a\pa_\x^\b z_k}\frac{\pa^2\ell_0}{\pa z_k\pa z_j} 
-\sum_{k=1}^n \bigpare{\pa_x^\a\pa_\x^\b \y_k}\frac{\pa^2\ell_0}{\pa\y_k \pa z_j}+r_2,\label{eq0a06a}
\end{align}
where 
\[
r_1=O\bigpare{\jap{t\y}^{-\m}\jap{\y}^{1-|\b|}},\quad
r_2=O\bigpare{\jap{t\y}^{-1-\m}\jap{\y}^{2-|\b|}}
\]
by the induction hypothesis. We also note
\[
\frac{\pa^2\ell_0}{\pa z\pa\y}=O\bigpare{\jap{t\y}^{-1-\m}\jap{\y}}, \quad 
\frac{\pa^2\ell_0}{\pa\y^2}=O\bigpare{\jap{t\y}^{-\m}}, \quad
\frac{\pa^2\ell_0}{\pa z^2}=O\bigpare{\jap{t\y}^{-2-\m}\jap{\y}^2}.
\]
We consider the case: $t>0$. The case: $t<0$ is handled similarly. 
We now let $R\gg 0$, and $R\leq |\x|\leq 2R$. This also implies $R/C\leq |\y|\leq CR$ 
with some $C>0$. Then we have 
\begin{multline*}
\biggabs{\frac{\pa}{\pa t} \bigpare{\bigabs{\pa_x^\a\pa_\x^\b z}+R^{-1}\bigabs{\pa_x^\a\pa_\x^\b \y}}}
\leq C\jap{tR}^{-\m}R\bigpare{\bigabs{\pa_x^\a\pa_\x^\b z}+R^{-1}\bigabs{\pa_x^\a\pa_\x^\b\y}} \\
+C\jap{tR}^{-\m}R^{1-|\b|}. 
\end{multline*}
By the Duhamel formula and the estimate on the initial condition: 
\[
\bigpare{\bigabs{\pa_x^\a\pa_\x^\b z}+R^{-1}\bigabs{\pa_x^\a\pa_\x^\b \y}}
\Big|_{t=0}\leq C R^{-|\b|},
\]
we learn 
\begin{align*}
\bigabs{\pa_x^\a\pa_\x^\b z}+R^{-1}\bigabs{\pa_x^\a\pa_\x^\b \y}&\leq 
C \exp\biggpare{C\int_0^\infty\jap{tR}^{-\m}Rdt}\times \\
&\qquad \times \biggpare{C R^{-|\b|}+C\int_0^\infty\jap{tR}^{-\m}R^{1-|\b|}dt}\\
&\leq C' R^{-|\b|}
\end{align*}
since $\m>1$. This implies \eqref{eq0a01a}. 

Now we use \eqref{eq0a05a}--\eqref{eq0a06a} again with \eqref{eq0a01a} to learn 
\begin{align*}
\biggabs{\frac{\pa}{\pa t} \bigpare{\bigabs{\pa_x^\a\pa_\x^\b z}}} &\leq C\jap{tR}^{-1-\m}R^{1-|\b|}
+C \jap{tR}^{-\m}R^{1-|\b|}+C\jap{tR}^{-\m}R^{1-|\b|}\\
&\leq C' \jap{tR}^{-\m}R^{1-|\b|}, \\
\biggabs{\frac{\pa}{\pa t} \bigpare{\bigabs{\pa_x^\a\pa_\x^\b \y}}}&\leq 
C\jap{tR}^{-2-\m}R^{2-|\b|}+C\jap{tR}^{-1-\m}R^{2-|\b|}+ C\jap{tR}^{-1-\m}R^{2-|\b|}\\
&\leq C'\jap{tR}^{-1-\m}R^{2-|\b|}.
\end{align*}
Hence, by integrating these on $[t,\infty)$, we learn 
\begin{align*}
\bigabs{\pa_x^\a\pa_\x^\b(z(t,x,\x)-z_+(x,\x))} &\leq C\int_t^\infty\jap{tR}^{-\m}R^{1-|\b|}dt \\
&=CR^{-|\b|}\int_{Rt}^\infty \jap{s}^{-\m}ds 
\leq CR^{-|\b|} \jap{tR}^{1-\m}, \\
\bigabs{\pa_x^\a\pa_\x^\b(\y(t,x,\x)-\x_+(x,\x))} &\leq C\int_t^\infty\jap{tR}^{-1-\m}R^{2-|\b|}dt \\
&=CR^{1-|\b|}\int_{Rt}^\infty \jap{s}^{-1-\m}ds 
\leq CR^{1-|\b|} \jap{tR}^{-\m}, 
\end{align*}
For the case: $t<0$, we integrate these inequalities on $(-\infty,t]$ to obtain corresponding estimates. 
\eqref{eq0a02a} and \eqref{eq0a03a} follows immediately from these estimates. 
\end{proof}

Next we consider the evolution:
\[
(z_1(t,x,\x),\y_1,t,x,\x))=w(t)(x,\x)=\exp(-tH_{p_0})\circ \exp(tH_p)(x,\x)
\]
and compare it with $w_0(t)(x,\x)$ as $|\x|\to\infty$ for $t$ in a fixed bounded interval. 
We denote $I_T=[-T,T]$, and we consider the case $t>0$ only in the proof.  

\begin{lem}\label{lem0a02a}
Suppose Assumption~\ref{ass0aa} with $1<\m<2$, and assume the global nontrapping condition. 
Let $T>0$. Then for any $\a,\b\in\ze_+^n$ and $K\Subset \re^n$, there is 
$C=C(\a,\b,K,T)>0$ such that 
\begin{equation}\label{eq0a07a}
\bigabs{\pa_x^\a\pa_\x^\b z_1(t,x,\x)}\leq C\jap{\x}^{-|\b|}, \quad 
\bigabs{\pa_x^\a\pa_\x^\b \y_1(t,x,\x)}\leq C\jap{\x}^{1-|\b|}
\end{equation}
and 
\begin{align}
&\bigabs{\pa_x^\a\pa_\x^\b (z_1(t,x,\x)-z(t,x,\x))}\leq C\jap{\x}^{1-\m-|\b|}, \label{eq0a08a}\\
&\bigabs{\pa_x^\a\pa_\x^\b (\y_1(t,x,\x)-\y(t,x,\x))}\leq C\jap{\x}^{1-\m-|\b|}\label{eq0a09a}
\end{align}
for $t\in I_T$, $x\in K$ and $\x\in\re^n$. 
\end{lem}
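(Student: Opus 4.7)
The plan is to compare the two flows directly on $I_T$, exploiting the fact that the generator of $w(t)$ is $\ell(t,x,\x)=\ell_0(t,x,\x)+\tilde V(t,x,\x)$, so the only new ingredient relative to Lemma~\ref{lem0a01a} is the potential contribution. All new information is extracted from the estimate $\bigabs{\pa_z^\a\pa_\y^\b\tilde V(t,z,\y)}\leq C_{\a\b K}\jap{t\y}^{-\m-|\a|}\jap{\y}^{-|\b|}$, which in particular gives forcing of size $O(\jap{t\x}^{1-\m})$ in the relevant places. Note that on $I_T$ with $|\x|$ bounded below, $\jap{t\x}^{1-\m}$ is bounded by $C_T\jap{\x}^{1-\m}$, and for $|\x|$ small the whole statement is a short-time Cauchy estimate handled directly.

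First I would establish \eqref{eq0a07a} by induction on $|\a|+|\b|$, closely parallel to the induction step in Lemma~\ref{lem0a01a}. Hamilton's equations for $(z_1,\y_1)$ with Hamiltonian $\ell=\ell_0+\tilde V$ yield, after differentiating in $(x,\x)$, a linear ODE system for $\pa_x^\a\pa_\x^\b(z_1,\y_1)$ of the form treated in \eqref{eq0a05a}--\eqref{eq0a06a}, with matrix entries controlled by $\pa^2\ell_0$ and $\pa^2\tilde V$, and remainder supplied by lower-order derivatives (induction hypothesis) together with mixed partials of $\ell$. Using the rescaled norm $\bigabs{\pa_x^\a\pa_\x^\b z_1}+R^{-1}\bigabs{\pa_x^\a\pa_\x^\b\y_1}$ with $R\asymp|\x|$ and Gronwall over $I_T$ produces the bounds \eqref{eq0a07a}; compared to Lemma~\ref{lem0a01a} no integrability in $t$ is needed because $I_T$ is bounded.

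Next I would set $\d z(t)=z_1(t)-z(t)$, $\d\y(t)=\y_1(t)-\y(t)$; these vanish at $t=0$ and satisfy
\[
\tfrac{d}{dt}\d z =\bigbrac{\pa_\y\ell_0(z_1,\y_1)-\pa_\y\ell_0(z,\y)} +\pa_\y\tilde V(z_1,\y_1),
\]
\[
\tfrac{d}{dt}\d\y =-\bigbrac{\pa_z\ell_0(z_1,\y_1)-\pa_z\ell_0(z,\y)} -\pa_z\tilde V(z_1,\y_1).
\]
The bracketed homogeneous terms are, by the mean value theorem, linear in $(\d z,\d\y)$ with coefficients bounded by $\pa^2\ell_0$ evaluated along the segment; with the rescaling $R\asymp|\x|$ these yield a Gronwall constant uniform in $\x$ on $I_T$. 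The inhomogeneous terms equal $-t\pa_xV(z_1-t\y_1)$ and $\pa_xV(z_1-t\y_1)$ respectively. Since \eqref{eq0a07a} implies $z_1-t\y_1$ has size comparable to $x-t\x$ for large $|\x|$, Assumption~\ref{ass0aa} yields these forcings are $O(\jap{t\x}^{1-\m})\leq C_T\jap{\x}^{1-\m}$. Gronwall over $I_T$ then gives \eqref{eq0a08a}--\eqref{eq0a09a} for $\a=\b=0$.

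The higher derivatives \eqref{eq0a08a}--\eqref{eq0a09a} follow by a second induction on $|\a|+|\b|$: differentiating the system for $(\d z,\d\y)$ gives again a linear system with the same leading matrix, plus inhomogeneous terms that involve either (i) derivatives of $\pa_x V(z_1-t\y_1)$, which produce additional factors $\jap{t\x}^{-\m-k}$ and are of order $\jap{\x}^{1-\m-|\b|}$ by \eqref{eq0a07a} and the chain rule on $I_T$, or (ii) brackets of the form $\pa^2\ell_0\cdot\pa_x^{\a'}\pa_\x^{\b'}(\d z,\d\y)$ with $|\a'|+|\b'|<|\a|+|\b|$, controlled by the induction hypothesis. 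The main obstacle is organizing the derivative-counting so that each application of $\pa_\x$ costs exactly one factor $\jap{\x}^{-1}$ and no more: this is handled cleanly by the rescaled-norm trick of Lemma~\ref{lem0a01a}, together with the uniformity of $t\in I_T$ that removes the need to track temporal integrability. The case $t<0$ is identical after reflecting time.
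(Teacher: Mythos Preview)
Your outline matches the paper's: establish \eqref{eq0a07a} by the rescaled-norm Gronwall of Lemma~\ref{lem0a01a}, then subtract Hamilton's equations and Gronwall the difference. The gap is in the claim that a single Gronwall step yields \emph{both} \eqref{eq0a08a} and \eqref{eq0a09a}. With the rescaled norm $N(t)=|\d z|+R^{-1}|\d\y|$ (which you must use, since otherwise the Gronwall coefficient is not uniform in $R\asymp|\x|$), your forcing bound gives $N(t)\leq C_T R^{1-\m}$, hence $|\d z|\leq C_T R^{1-\m}$, which is \eqref{eq0a08a}; but it gives only $|\d\y|\leq C_T R\cdot R^{1-\m}=C_T R^{2-\m}$, one full power of $\jap{\x}$ worse than \eqref{eq0a09a}. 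The rescaling that tames the Gronwall constant costs exactly one factor of $R$ when you unpack $\d\y$.

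The paper recovers this factor by a bootstrap. It first records the $t$-dependent preliminary bound $N(t)\leq C\,t\,R^{1-\m}$ (the explicit factor $t$ matters), then substitutes it back into the $\d\y$-equation alone and integrates once more, exploiting that $\pa_z^2\ell_0=O(\jap{tR}^{-2-\m}R^2)$ and $\pa_z\pa_\y\ell_0=O(\jap{tR}^{-1-\m}R)$ carry extra decay in $\jap{tR}$; after using $t\leq\jap{tR}/R$, every term in $\pa_t\d\y$ integrates to $O(R^{1-\m})$, giving \eqref{eq0a09a}. The same substitution step is needed again in the induction for the higher derivatives of \eqref{eq0a09a}; without it your scheme closes only at $\jap{\x}^{2-\m-|\b|}$. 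So your approach is right, but the argument for \eqref{eq0a09a} is incomplete as written.
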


\begin{proof}
{\bf Step 1: } We first show \eqref{eq0a08a} and \eqref{eq0a09a} with $\a=\b=0$.
We denote $\tilde V(t,z,\y)=V(z-t\y)$ so that $\ell(t,z,\y)=\ell_0(t,z,\y)+\tilde V(t,z,\y)$. 
We note 
\begin{align}
&\frac{\pa}{\pa t} (z_1-z) =\frac{\pa \ell_0}{\pa\y}(z_1,\y_1)-\frac{\pa\ell_0}{\pa \y}(z,\y)
+\frac{\pa \tilde V}{\pa \y}(z_1,\y_1), \label{eq0a10a}\\
&\frac{\pa}{\pa t} (\y_1-\y) =-\frac{\pa \ell_0}{\pa z}(z_1,\y_1)+\frac{\pa\ell_0}{\pa z}(z,\y)
-\frac{\pa \tilde V}{\pa z}(z_1,\y_1).\label{eq0a11a}
\end{align}
We now suppose 
\begin{equation}\label{eq0a12a}
|z_1-z|\leq \e_0|z|, \quad 
|\y_1-\y|\leq \e_0|\y|
\end{equation}
for $x\in K\Subset \re^n$, $\x\in\re^n$ and $t\in I_{T'}$ with $T'>0$. We suppose $t>0$
and we have 
\begin{align}\label{eq0a13a}
\frac{\pa}{\pa t} (z_1-z) &= (z_1-z)\cdot 
\int_0^1 \frac{\pa^2 \ell_0}{\pa z\pa\y}(sz_1+(1-s)z,s\y_1+(1-s)\y)ds\\
&\quad + (\y_1-\y)\cdot \int_0^1\frac{\pa^2\ell_0}{\pa\y\pa\y}
(sz_1+(1-s)z,s\y_1+(1-s)\y)ds\nonumber \\
&\quad +\frac{\pa \tilde V}{\pa\y}(z_1,\y_1), \nonumber\\
\label{eq0a14a}\frac{\pa}{\pa t} (\y_1-\y) &= -(z_1-z)\cdot 
\int_0^1 \frac{\pa^2 \ell_0}{\pa z\pa z}(sz_1+(1-s)z,s\y_1+(1-s)\y)ds\\
&\quad - (\y_1-\y)\cdot \int_0^1\frac{\pa^2\ell_0}{\pa\y\pa z}
(sz_1+(1-s)z,s\y_1+(1-s)\y)ds\nonumber \\
&\quad -\frac{\pa \tilde V}{\pa z}(z_1,\y_1), \nonumber
\end{align}
for $t\in[0,T']$. We again assume $R\leq |\x|\leq 2R$ with $R\gg0$, so that 
$|\y|\sim O(|\x|)=O(R)$. Then we have 
\begin{align*}
\frac{\pa}{\pa t}(|z_1-z|+R^{-1}|\y_1-\y|) &\leq C\jap{tR}^{-\m}R(|z_1-z|+R^{-1}
|\y_1-\y|) \\
&\quad + C\jap{tR}^{2-\m}R^{-1}.
\end{align*}
Then by the Duhamel formula, we learn 
\begin{align*}
|z_1-z|+R^{-1}|\y_1-\y| &\leq C e^{C\int_0^t \jap{sR}^{-\m}Rds}\times 
\int_0^t \jap{sR}^{2-\m}R^{-1}ds\\
&\leq C'R^{-2}\int_0^{Rt}\jap{s}^{2-\m}ds,
\end{align*}
since $z_1=z$, $\y_1=\y$ at $t=0$. We note 
\[
\int_0^\s \jap{s}^{2-\m}ds \leq \int_0^\s(1+s)^{2-\m}ds =\frac{(1+\s)^{3-\m}-1}{3-\m}
\leq C(\s+\s^{3-\m}), 
\]
and hence 
\begin{equation}\label{eq0a15a}
|z_1-z|+R^{-1}|\y_1-\y|\leq CR^{-2}(Rt+(Rt)^{3-\m})\leq C' t R^{1-\m}
\end{equation}
for $t\in [0,T']$, $R\gg 0$. Thus, in particular,  \eqref{eq0a12a} holds with 
$\e_0=O(R^{1-\m})$. By contradiction, we learn that \eqref{eq0a12a} holds for $t\in [0,T]$ if 
$|\x|$ is sufficiently large. \eqref{eq0a15a} also implies \eqref{eq0a08a} with $\a=\b=0$. 
We substitute \eqref{eq0a15a} to \eqref{eq0a14a} to learn 
\begin{align*}
\biggabs{\frac{\pa}{\pa t}(\y_1-\y)} &\leq C\biggpare{t\jap{tR}^{-2-\m}R^{3-\m}+
t\jap{tR}^{-1-\m}R^{3-\m}+\jap{tR}^{1-\m}}\\
&\leq C\biggpare{\jap{tR}^{-1-\m}R^{2-\m}+ \jap{tR}^{-\m}R^{2-\m}+\jap{tR}^{1-\m}}
\end{align*}
since $t\jap{tR}^{-1}\leq R^{-1}$. Integrating this inequality, we have 
\begin{align*}
|\y_1-\y| &\leq C\biggpare{R^{1-\m}\int_0^\infty \jap{tR}^{-\m}  Rdt+\int_0^t \jap{sR}^{1-\m}ds} \\
&\leq C'\biggpare{R^{1-\m}+\frac{\jap{tR}^{2-\m}}{R}}\leq C'' R^{1-\m}
\end{align*}
for $t\in[0,T]$, and this implies \eqref{eq0a09a} with $\a=\b=0$. 

{\bf Step 2: } We then prove \eqref{eq0a07a} mimicking the proof of \eqref{eq0a01a}. We note 
\[
\bigabs{\pa_z^\a\pa_\y^\b\ell(z,\y)}
\leq C\bigpare{\jap{t\y}^{-\m-|\a|}\jap{\y}^{2-|\b|}+\jap{t\y}^{2-\m-|\a|}\jap{\y}^{-|\b|}}
\]
for any $\a,\b\in\ze_+^n$. We prove them by induction in $|\a|+|\b|=m$. 
We suppose \eqref{eq0a07a} holds for $|\a|+|\b|<m$ and let $|\a|+|\b|=m$. 
Analogously to the proof of \eqref{eq0a01a}, it follows from the induction step that 
if $R\leq|\x|\le 2R$ with $R\gg0$, 
\begin{align*}
&\frac{\pa}{\pa t} \bigpare{\bigabs{\pa_x^\a\pa_\x^\b z_1}
+R^{-1}\bigabs{\pa_x^\a\pa_\x^\b\y_1}}\\
&\quad \leq C\bigpare{\jap{tR}^{-\m}R+\jap{tR}^{2-\m}R^{-1}}
\bigpare{\bigabs{\pa_x^\a\pa_\x^\b z_1}
+R^{-1}\bigabs{\pa_x^\a\pa_\x^\b\y_1}}\\
&\quad\quad  +C\bigpare{\jap{tR}^{-\m}R^{1-|\b|}+\jap{tR}^{2-\m}R^{-1-|\b|}}.
\end{align*}
Hence, by using the Duhamel formula again, we obtain 
\begin{align*}
&\bigabs{\pa_x^\a\pa_\x^\b z_1}+R^{-1}\bigabs{\pa_x^\a\pa_\x^\b\y_1}\\
&\quad\leq C\exp\biggpare{C\int_0^t \jap{sR}^{-\m}Rds +C\int_0^t\jap{sR}^{2-\m}R^{-1}ds}\times \\
&\quad\quad \times \biggpare{R^{-|\b|}+\int_0^t \jap{sR}^{-\m}R^{1-|\b|}ds 
+\int_0^t \jap{sR}^{2-\m}R^{-1-|\b|}ds }\\
&\quad \leq C' R^{-|\b|}
\end{align*}
for $t\in[0,T]$, and \eqref{eq0a07a} follows. 

{\bf Step 3: } We now prove \eqref{eq0a10a} and \eqref{eq0a11a}. We again prove it by induction in $m=|\a|+|\b|$. 
We suppose 
\begin{equation}\label{eq0a16a}
\bigabs{\pa_x^\a\pa_\x^\b (z_1-z)}\leq C|t|\jap{\x}^{1-\m-|\b|}, \quad 
\bigabs{\pa_x^\a\pa_\x^\b (\y_1-\y)}\leq C|t|\jap{\x}^{2-\m-|\b|}
\end{equation}
hold if $|\a|+|\b|<m$. We differentiate \eqref{eq0a13a} and \eqref{eq0a14a} to learn 
\begin{align*}
&\frac{\pa}{\pa t}(\pa_x^\a\pa_\x^\b(z_1-z)) \\
&\quad= \pa_x^\a\pa_\x^\b (z_1-z)\cdot 
\int_0^1 \frac{\pa^2\ell_0}{\pa z\pa\y}(sz_1+(1-s)z,s\y_1+(1-s)\y)ds \\
&\qquad+ \pa_x^\a\pa_\x^\b (\y_1-\y)\cdot 
\int_0^1 \frac{\pa^2\ell_0}{\pa\y\pa\y}(sz_1+(1-s)z,s\y_1+(1-s)\y)ds \\
&\qquad +r_1,
\end{align*}
where 
\begin{align*}
|r_1|&\leq C\bigpare{|t|\jap{tR}^{-1-\m}R^{2-\m-|\b|}+|t|\jap{tR}^{-\m} R^{2-\m-|\b|}
+\jap{tR}^{2-\m}R^{-1-|\b|}}\\
&\leq C'\bigpare{\jap{tR}^{1-\m}R^{1-\m-|\b|}+\jap{tR}^{2-\m}R^{-1-|\b|}}, 
\end{align*}
and 
\begin{align}\label{eq0a17a}
&\frac{\pa}{\pa t}(\pa_x^\a\pa_\x^\b(\y_1-\y)) \\
&\quad= -\pa_x^\a\pa_\x^\b (z_1-z)\cdot 
\int_0^1 \frac{\pa^2\ell_0}{\pa z\pa z}(sz_1+(1-s)z,s\y_1+(1-s)\y)ds \nonumber\\
&\qquad+ \pa_x^\a\pa_\x^\b (\y_1-\y)\cdot 
\int_0^1 \frac{\pa^2\ell_0}{\pa\y\pa z}(sz_1+(1-s)z,s\y_1+(1-s)\y)ds \nonumber \\
&\qquad +r_2,\nonumber
\end{align}
where 
\begin{align*}
|r_2|&\leq C\bigpare{|t|\jap{tR}^{-2-\m}R^{3-\m-|\b|}
+|t|\jap{tR}^{-1-\m}R^{3-\m-|\b|} +\jap{tR}^{1-\m}R^{-|\b|}}\\
&\leq C'\bigpare{\jap{tR}^{-\m}R^{2-\m-|\b|}+\jap{tR}^{1-\m}R^{-|\b|}},
\end{align*}
by virtue of \eqref{eq0a16a}. These imply
\begin{align*}
&\frac{\pa}{\pa t}\bigpare{\bigabs{\pa_x^\a\pa_\x^\b(z_1-z)}
+R^{-1}\bigabs{\pa_x^\a\pa_\x^\b(\y_1-\y)}}\\
&\quad \leq C\bigpare{\bigabs{\pa_x^\a\pa_\x^\b(z_1-z)}
+R^{-1}\bigabs{\pa_x^\a\pa_\x^\b(\y_1-\y)}}\jap{tR}^{-\m}R \\
&\qquad +C\bigpare{\jap{tR}^{1-\m}R^{1-\m-|\b|}+\jap{tR}^{2-\m}R^{-1-|\b|}},
\end{align*}
and by the Duhamel formula again with the vanishing initial conditions, we obtain 
\[
\bigabs{\pa_x^\a\pa_\x^\b(z_1-z)}+R^{-1}\bigabs{\pa_x^\a\pa_\x^\b(\y_1-\y)}
\leq C|t|R^{1-\m-|\b|}
\]
for $t\in[0,T]$. This proves \eqref{eq0a16a} for $|\a|+|\b|=m$, and we learn \eqref{eq0a16a} holds for 
all $\a,\b$. \eqref{eq0a08a} follows immediately from \eqref{eq0a16a}. We substitute \eqref{eq0a16a} to \eqref{eq0a17a}, 
and we have \eqref{eq0a09a} analogously to Step 1.
\end{proof}


\end{document}